\theoremstyle{plain}
\newtheorem{theorem}{Theorem}[section]
\newtheorem{lemma}[theorem]{Lemma}
\newtheorem{proposition}[theorem]{Proposition}
\newtheorem{corollary}[theorem]{Corollary}
\theoremstyle{definition}
\newtheorem{definition}[theorem]{Definition}
\newtheorem{remark}[theorem]{Remark}
\newtheorem{example}[theorem]{Example}
\newcommand\cO{{\mathcal O}}
\newcommand\aff{\operatorname{aff}}
\newcommand\soc{\operatorname{soc}}
\newcommand\uni{\operatorname{uni}}
\newcommand\aq{/_{_{\operatorname{aff}}}\,}
\newcommand\cq{/\hspace*{-2pt}/}
\newcommand\Spec{\operatorname{Spec}}
\title{Observable actions of algebraic groups}
\author{Lex Renner and Alvaro Rittatore}
\thanks{The first named author was partially 
    supported by a grant from NSERC. The second named author was
    artially supported by grants from IMU/CDE    and
NSERC}
\begin{document}

\maketitle

\begin{abstract}
Let $G$ be an affine algebraic group and let $X$ be an affine
algebraic variety.
An action $G\times X \to X$ is
called  {\em observable} if for any $G$-invariant, proper, closed
subset $Y$ of $X$ there is
a nonzero invariant $f\in \Bbbk [X]^G$ such that $f|_{_Y} =0$. We
characterize this condition
geometrically as follows. The action $G\times X \to X$ is observable
if and only
if (1) there is
a nonempty open subset $U\subseteq X$ consisting of closed orbits, and
(2) the field
$\Bbbk (X)^G$ of $G$-invariant rational functions on $X$ is equal to
the quotient field
of $\Bbbk [X]^G$. In case $G$ is reductive, we conclude that there
exists a unique, maximal, $G$-stable,
closed subset $X_{\soc}$ of $X$ such that $G\times X_{\soc} \to X_{\soc}$
is observable. Furthermore, the canonical map $X_{\soc}\cq G \to X\cq G$ is
bijective.

\end{abstract}

\section{Introduction}

A closed subgroup $H$ of the affine algebraic group $G$ is called
an
{\em observable subgroup} if the homogeneous space $G/H$ is a
quasi-affine variety. Such subgroups have been introduced by
Bialynicki-Birula, Hochschild and  Mostow   in \cite{kn:obsdef}, and
researched extensively since then,
notably by F.~Grosshans. See \cite{kn:Gross-14} for a survey on
this topic, and Theorem \ref{thm:obsercond} below for
other useful characterizations of observable subgroups.

In this paper we develop further this point of view, and define the
notion of an \emph{observable action} of $G$ on the affine variety
$X$. Our main results
pertain to actions of non-reductive groups. But we also investigate the
situation where $G$ is reductive.

To state our results we first introduce some notation. Let
$\Bbbk $ be an algebraically closed field. We work
with affine algebraic varieties $X$ over $\Bbbk $. An algebraic
group is assumed to be a smooth, affine, group scheme of finite type
over $\Bbbk $. If $X$ is an affine variety over $\Bbbk $ we
denote by $\Bbbk [X]$ the ring of regular functions on $X$. If
$I\subset \Bbbk[X]$ is an ideal, we denote by $\mathcal
V(I)=\bigl\{x\in X\mathrel{:} f(x)=0 \ \forall \, f\in I\bigr\}$.  If
$Y\subset X$ is a subset, we denote by $\mathcal I(Z)=\bigr\{ f\in
\Bbbk[X]\mathrel{:} f(y)=0\ \forall\, y\in Y\bigr\}$. If $f\in
\Bbbk[X]$, we denote by $X_f=\bigl\{x\in X\mathrel{:} f(x)\neq
0\bigr\}$.  Morphisms
$\varphi:X\to Y$ between affine varieties correspond to morphisms of
algebras $\Bbbk[Y] \to \Bbbk[X]$, by $\varphi\mapsto \varphi^*$,
$\varphi^*(f)=f\circ \varphi$.  If $X$
is irreducible we denote by $\Bbbk (X)$ the field of rational
functions on $X$. If $A$ is any integral domain we denote by $[A]$ its
quotient field. Thus if $X$ is an affine variety, then
$\Bbbk(X)=\bigl[\Bbbk [X]\bigr]$.
If $G\times X\to X$ is a regular action of $G$ on $X$ we consider the
induced action of $G$ on $\Bbbk [X]$, defined as  follows. For $f\in
\Bbbk [X]$ and $g\in G$ we
set $(g\cdot f)(x)=f(g^{-1}x)$. It is well known that $G$-stable
closed subset of $X$ correspond to $G$-stable radical ideals  of
$\Bbbk[X]$. If $x\in X$ we will denote by $\mathcal O(x)=\{g\cdot
x\mathrel{:} g\in G\}$ the \emph{$G$-orbit of $x$}. We say that
$f\in\Bbbk [X]$ is \emph{G-invariant} if $g\cdot f=f$ for any $g\in
G$. The set of $G$-invariants $\Bbbk [X]^G$ forms a
$\Bbbk $-subalgebra of $\Bbbk [X]$, possibly non-finitely
generated.

In this paper we are interested in situations whereby
an affine algebraic group acts on an irreducible, affine variety $X$
in such a  way that
$\bigl[\Bbbk [X]^G\bigr]=\Bbbk (X)^G=\bigl[\Bbbk [X]\bigr]^G$. When this
happens, one can  separate the orbits of maximal dimension {\em generically}
with invariant regular
functions. See for example \cite[Prop.~3.4]{kn:popvin}. In Theorem
\ref{thm:niceopensubset} and Corollary \ref{coro:affinizedsep} we
obtain a generalization of Igusa's criterion (see \cite{kn:igusa} and
\cite[\S 4.5]{kn:popvin}). We express our results in terms of the
\emph{affinized quotient} $\pi: X\to X\aq
G$, induced by the  inclusion $\Bbbk[X]^G\subset \Bbbk[X]$. This
quotient, even when it does not coincide with the
categorical quotient, contains enough information about the action
to give us control of an open subset of orbits of maximal
dimension. See Definition \ref{defi:affquot}.

We define a regular action $G\times X\to X$, to be \emph{observable} if for
any nonzero $G$-stable ideal $I\subset\Bbbk [X]$ we have $I^G\neq (0)$.
A closed subgroup $H$ of $G$ is an observable subgroup, in the sense of Grosshans above,
if and only if the action $H\times G\to G$, by left
translations, is an observable action. It turns out that, in this situation,
the left action is observable if and only if the right action is so.

If  $G\times X\to X$ is an observable action of $G$ on the affine variety $X$,
then it follows easily that $X$ contains a dense open subset consisting entirely
of closed $G$-orbits. This is a sufficient condition for the action to be observable
if $G$ is reductive, but not in general. In Theorem
\ref{thm:obsercharac}, the main result of this paper, we provide the following
geometric characterization. The action $G\times X\to X$ is observable
if and only if (1) $\bigl[\Bbbk [X]^G\bigr]= \bigl[\Bbbk [X]\bigr]^G$
and (2) $X$ contains a dense open subset consisting entirely of closed
$G$-orbits.

It seems natural then to consider the affinized quotient of  an
observable action. In this case, we show that there exists a nonzero invariant
$f\in\Bbbk[X]^G$ such that the affinized quotient $\pi: X_f\to X_f\aq G$ is
a geometric quotient; see Theorem \ref{thm:geoquot}.

As an application,
we consider the case of a reductive group $G$ acting on the
irreducible, affine variety $X$. In this case  we prove that there
exists a unique, maximal, $G$-stable, closed subset $X_{\soc}\subset X$
such that the restricted action $G\times X_{\soc}\to X_{\soc}$ is
observable.  Furthermore, $\Bbbk [X]^G\subseteq\Bbbk [X_{\soc}]^G$ is a
finite, purely inseparable extension. In particular, the morphism
$X_{\soc}\cq G\to
X\cq G$ is finite and bijective. See theorems \ref{thm:socquot} and
\ref{thm:socobser}.

\medskip

{\sc Acknowledgements: } We would like to thank Michel Brion and Walter
Ferrer for their
useful comments and suggestions.

This paper was written during a stay of the second author at the
University of Western Ontario. He would like to thank them for the kind
hospitality he received during his stay.

\section{Preliminaries}

Let $G$ be an affine algebraic group and $X$ an algebraic variety. As
usual, a \emph{(regular) action} of $G$ on $X$
is a morphism $\varphi:G\times X\to X$, denoted by $\varphi(g,x)=g\cdot
x$, such that $(ab)\cdot x= a\cdot (b\cdot x)$ and $1\cdot x=x $ for
all $a,b\in G$ and $x\in X$. Since all the actions we will work with
are regular, we will drop the adjective regular.

\begin{definition} \label{obs.def}

Let $G$ be an affine algebraic group and $H\subset G$ a closed subgroup. The
subgroup {\em $H$ is observable in $G$}\/ if and only if for any
nonzero $H$-stable ideal $I \subset \Bbbk [G]$ we obtain that $I
^H\neq (0)$.
\end{definition}

\begin{remark}
The notion of an observable subgroup was introduced
by  Bialynicki-Birula, Hochschild and  Mostow   in
\cite{kn:obsdef}.
The characterization of observable subgroups
by the condition ``$I^H\neq (0)$'' in Definition \ref{obs.def}
appears for the first time in \cite[Definition 10.2.1]{fer-ritt}.
\end{remark}

\begin{theorem}
\label{thm:obsercond}
Let $G$ be an affine algebraic
group and $H\subset G$ a closed subgroup. Then the following
conditions are equivalent:

\begin{enumerate}
\item The subgroup $H$ is observable in $G$.

\item The homogeneous space $G/H$ is a quasi-affine variety.

\item For an arbitrary proper and closed subset $C \subsetneq
G/H$, there exists an nonzero invariant regular function $ f \in
\Bbbk [G]^H$ such that
$f(C)=0$.
\end{enumerate}
\medskip

Moreover, if  $G$ is connected the above conditions are equivalent to
the condition

\begin{enumerate}
\item[(4)] $\bigl[ \Bbbk [G]\bigr]^H=
\bigl[\Bbbk [G]^H\bigr]$.
\end{enumerate}
\end{theorem}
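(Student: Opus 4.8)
The plan is to prove the four conditions equivalent by first disposing of the elementary equivalence $(1)\Leftrightarrow(3)$ through the ideal--closed-set dictionary, and then treating $(2)\Leftrightarrow(3)$ and (for connected $G$) $(4)$ by analysing the \emph{affinization morphism} of the homogeneous space. Throughout I would use the standard facts about $G/H$: the quotient map $\pi\colon G\to G/H$ exists as a morphism of varieties, it is open and faithfully flat, it identifies $\Bbbk[G/H]$ with $\Bbbk[G]^H$, and it sets up a bijection between closed subsets of $G/H$ and $H$-stable (i.e.\ $H$-saturated) closed subsets of $G$, equivalently between radical ideals of $\Bbbk[G/H]$ and $H$-stable radical ideals of $\Bbbk[G]$. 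The groups act everywhere and $\pi$ (and the affinization map below) are $G$-equivariant; homogeneity, i.e.\ transitivity of $G$ on $G/H$, is the lever that upgrades generic statements to global ones.

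For $(1)\Rightarrow(3)$: given a proper closed $C\subsetneq G/H$, its preimage is a proper $H$-stable closed subset of $G$, so $I=\mathcal I\bigl(\pi^{-1}(C)\bigr)$ is a nonzero $H$-stable ideal; by $(1)$ there is $0\neq f\in I^H$, and $f$, being $H$-invariant and vanishing on $\pi^{-1}(C)$, descends to the required nonzero invariant on $G/H$ vanishing on $C$. For $(3)\Rightarrow(1)$: a nonzero $H$-stable ideal $I$ has proper $H$-stable vanishing locus $\mathcal V(I)\subsetneq G$, whose image $C=\pi(\mathcal V(I))$ is a \emph{proper} closed subset of $G/H$ (closed because $\mathcal V(I)$ is $H$-saturated and $\pi$ is a quotient map, proper because $\mathcal V(I)\neq G$); condition $(3)$ then supplies $0\neq f\in\Bbbk[G]^H$ vanishing on $\pi^{-1}(C)\supseteq\mathcal V(I)$, so $f\in\sqrt I$ and some power $f^n\in I$ is a nonzero invariant (nonzero since $\Bbbk[G]$ is reduced), giving $I^H\neq(0)$.

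For $(2)\Leftrightarrow(3)$ the easy direction is $(2)\Rightarrow(3)$: realizing $G/H$ as a locally closed subvariety of some $\bA^n$, a proper closed $C\subsetneq G/H$ satisfies $C=\overline C\cap(G/H)$, hence $\overline C\subsetneq\overline{G/H}$ in $\bA^n$, so a polynomial vanishing on $\overline C$ but not on $\overline{G/H}$ restricts to the desired nonzero invariant. The substantive direction, and the main obstacle, is $(3)\Rightarrow(2)$. Here I would pass to a finitely generated $G$-stable subalgebra $A\subseteq\Bbbk[G]^H$ (legitimate since the $G$-action on $\Bbbk[G]^H$ is locally finite, so any finite set of invariants lies in a finite-dimensional $G$-submodule), obtaining a $G$-equivariant morphism $\psi\colon G/H\to Z:=\Spec A$ to an affine variety of finite type whose image is the single $G$-orbit $\psi(G/H)$, hence locally closed in $Z$. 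The task is to choose $A$ large enough that $\psi$ is an open immersion onto its image: condition $(3)$ (equivalently $(1)$) is exactly what forces the invariants to separate $H$-cosets and to cut out closed sets, so that $\psi$ becomes injective and, via Zariski's main theorem together with $G$-equivariance, an immersion, realizing $G/H$ as a locally closed subvariety of the affine $Z$, i.e.\ as quasi-affine. Controlling $A$ carefully in the possibly non-finitely-generated setting (Nagata) is the delicate point.

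Finally, for connected $G$ I would prove $(2)\Leftrightarrow(4)$, using $\bigl[\Bbbk[G]\bigr]^H=\Bbbk(G/H)$ and $\bigl[\Bbbk[G]^H\bigr]=\bigl[\Bbbk[G/H]\bigr]$ (generically $\pi$ is an $H$-torsor, so invariant rational functions are rational functions on the quotient). If $G/H$ is open in the affine variety $\overline{G/H}$, then $\Bbbk(G/H)=\bigl[\Bbbk[\overline{G/H}]\bigr]\subseteq\bigl[\Bbbk[G/H]\bigr]\subseteq\Bbbk(G/H)$ forces equality, giving $(4)$. Conversely, assume $(4)$: choosing a finitely generated $G$-stable subalgebra $A\subseteq\Bbbk[G/H]$ with $[A]=\bigl[\Bbbk[G/H]\bigr]=\Bbbk(G/H)$, the equivariant morphism $\psi\colon G/H\to\Spec A$ is \emph{birational}. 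Letting $V\subseteq\Spec A$ be the maximal open set over which $\psi$ is an isomorphism, equivariance makes $V$ a $G$-stable open set, so $\psi^{-1}(V)$ is a nonempty $G$-stable open subset of the single orbit $G/H$; hence $\psi^{-1}(V)=G/H$ and $\psi$ is an isomorphism of $G/H$ onto the open subset $V$ of the affine variety $\Spec A$, so $G/H$ is quasi-affine, which is $(2)$. Connectedness is used precisely here, to make $G/H$ irreducible (so that birationality and "isomorphism over a dense open set" are available) and to guarantee $\bigl[\Bbbk[G]\bigr]^H=\Bbbk(G/H)$.
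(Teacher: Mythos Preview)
The paper does not actually prove this theorem; its entire proof is the citation ``See for example \cite[Thm.~10.5.5]{fer-ritt}.'' So there is nothing in the paper to compare your argument against --- you are supplying the argument the authors chose to outsource. Your treatment of $(1)\Leftrightarrow(3)$, of $(2)\Rightarrow(3)$, and (for connected $G$) of $(2)\Leftrightarrow(4)$ is correct; in particular the $(4)\Rightarrow(2)$ argument, producing a birational $G$-equivariant morphism $\psi\colon G/H\to\Spec A$ and then using transitivity of $G$ on $G/H$ to spread the locus of isomorphism to all of $G/H$, is clean and complete.

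The genuine gap is in $(3)\Rightarrow(2)$. You assert that condition $(3)$ ``forces the invariants to separate $H$-cosets'', but this is precisely the hard content of the implication and is not delivered by the affinization/Zariski-main-theorem package you invoke. Concretely, let $K\supseteq H$ be the stabilizer of $\psi(eH)$, taking $A$ as large as you like inside $\Bbbk[G]^H$. Because left $G$-translates of right-$H$-invariants are again right-$H$-invariants, every $f\in\Bbbk[G]^H$ satisfies $f(gk)=f(g)$ for all $g\in G$, $k\in K$; hence $\Bbbk[G]^H=\Bbbk[G]^K$. Condition $(3)$ applied to a proper closed $C\subsetneq G/H$ then only yields a nonzero function on $G/K$ vanishing on the image of $C$, and nothing in this prevents $K\supsetneq H$. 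Zariski's main theorem contributes nothing here, since $\psi\colon G/H\to G/K$ is neither known to be quasi-finite nor birational until $K=H$ is already established. The way this is closed in the cited reference is representation-theoretic rather than geometric: from Chevalley's theorem one writes $H=G_{[v]}$ for a line $\Bbbk v\subset V$ with character $\chi\colon H\to\Bbbk^*$, and condition $(1)$ applied to the $H$-stable principal ideal generated by a matrix coefficient of $v$ produces a nonzero $\chi^{-1}$-semi-invariant in $\Bbbk[G]$; from this one builds a $G$-module $W$ and a vector $w$ (essentially $v$ tensored with that semi-invariant) with $G_w=H$ exactly, giving the quasi-affine embedding $G/H\cong G\cdot w$. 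Your affinization strategy does not bypass this step.
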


\begin{proof}
See for example \cite[Thm.~10.5.5]{fer-ritt}.
\end{proof}

\begin{definition}
Let $G$ be an affine algebraic group acting on an affine variety
$X$. We say that  the action is \emph{observable}, if for any nonzero
$G$-stable ideal $I\subset \Bbbk[X] $, $I^G\neq (0)$.
\end{definition}

\begin{example}
(1) Let $G$ be an algebraic group and $H$ a closed subgroup. Then
the action of $H$ on $G$ by translations is observable if and only if
$H$ is observable in $G$.

\noindent (2)  Any action of the unipotent group $U$ on an affine variety is
observable, since for any nonzero $U$-module $M$, we obtain that
$M^U\neq \{0\}$. We will prove in Proposition \ref{prop:unipchar}
below that this
property characterizes unipotent groups.
 \end{example}

First, we recall the construction of the \emph{induced space}:

\begin{definition} \label{induction.def}
Let $ G$ be an algebraic group, and let a closed subgroup $H\subset G$
act on
an algebraic variety $X$. The {\em induced space}\/ $G*_HX$
is defined as the geometric quotient
of $G\times X$ under the $H$-action $h\cdot(g,x)=(gh^{-1},h\cdot
x)$.

Under mild conditions on $X$ (e.g.~$X$ normal and covered by
quasi-projective $H$-stable open subsets), this quotient exists. Clearly,
$G*_HX$ is a $G$-variety, for the action induced by $a\cdot
(g,x)=(ag,x)$. We will denote the class of $(g,x)$ in $G\times X$ by
$[g,x]\in G*_HX$. We refer the
reader to \cite{kn:bb-induced}, where the notion of induced space was
introduced by
Bialynicki-Birula, and to \cite{Ti06}, for a concise survey on this
construction and its many properties, and resume in the next paragraphs
the properties of the induced space we need in what follows.

It is easy to see that $G*_HX$ is a $G$-variety,
for the action $a\cdot [g,x]=[ag,x]$, $a,g\in G$, $x\in X$.

The projection $G\times X\to  G$ induces a morphism $\rho:G*_HX\to
G/H$, $[g,x]\leadsto [g]$. The morphism $\rho$ is a $G$-equivariant fibration, 
with fiber
\[
X=\rho^{-1}([1])=\bigl\{ [1,x]\mathrel{:} x\in X\bigr\}\subset  G*_HX.
\]
Moreover, the $G$-stable closed subsets of $ G*_HX$ correspond
bijectively to
$H$-stable closed subset of $X$, via the maps
\[
 G*_HX\supset Z\leadsto Z\cap
X\quad  \text{and} \quad X\supset Y\leadsto GY\subset  G*_HX.
\]
\end{definition}

\begin{proposition}
\label{prop:unipchar}
Let $G$ be a connected affine algebraic group such that every action
of $G$ on an
affine algebraic variety is observable. Then $G$ is a unipotent group.
\end{proposition}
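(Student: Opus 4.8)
The plan is to prove the contrapositive: if $G$ is connected but not unipotent, I will exhibit an affine $G$-variety with a non-observable action. The key structural fact I would use is that a connected affine algebraic group $G$ is unipotent if and only if it has no nontrivial characters, equivalently (in characteristic zero, and with appropriate care in positive characteristic via the reductive/unipotent dichotomy) if and only if $G$ admits no nontrivial homomorphism to a torus or to $\GL_1$. So if $G$ is not unipotent, I would first produce a nontrivial character $\chi: G \to \G_m$, or more robustly a nontrivial one-dimensional rational representation on which $G$ acts by a nontrivial character. The existence of such a character for a non-unipotent connected group is where I would lean on standard structure theory.

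Given a nontrivial character $\chi: G \to \G_m$, the natural construction is to let $G$ act on $X = \bA^1 = \Spec \bk[t]$ by $g \cdot s = \chi(g)\, s$, i.e. the one-dimensional representation with weight $\chi$. Here $\bk[X] = \bk[t]$, and the induced action on functions sends $t$ to $\chi(g)^{-1} t$, so every nonconstant monomial $t^n$ spans a nontrivial $\chi^{-n}$-eigenspace. Consequently $\bk[X]^G = \bk$, the constants. Now consider the $G$-stable ideal $I = (t) = \cI(\{0\})$; it is nonzero and $G$-stable since the origin is a fixed point. But $I^G = I \cap \bk[X]^G = (t) \cap \bk = (0)$, since the only invariant in $(t)$ is $0$. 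This exhibits a nonzero $G$-stable ideal with $I^G = (0)$, so the action is not observable, contradicting the hypothesis. Thus $G$ must be unipotent.

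The main obstacle I anticipate is handling positive characteristic cleanly, where ``not unipotent'' does not immediately hand me a character to $\G_m$. A connected affine group has a Levi-type decomposition only in good cases, but the robust statement I would invoke is: a connected affine algebraic group is unipotent if and only if it has no nontrivial characters \emph{and} no nontrivial multiplicative one-parameter structure in a suitable sense; more precisely, if $G$ is not unipotent then its reductive quotient $G/R_u(G)$ is a nontrivial reductive group, which contains a nontrivial torus, and pulling back any nontrivial character of that torus through a Borel gives a nontrivial character of $G$ — unless $G/R_u(G)$ is semisimple. If $G$ is semisimple (or more generally has no characters), the argument above fails, and I would instead need a representation with no nonzero invariant vectors together with a fixed point: for a nontrivial semisimple $G$ one takes any nontrivial irreducible representation $V$ (whose invariants vanish) acting linearly on $X = V$, and again the maximal ideal of the origin is $G$-stable with trivial invariant part. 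So the cleaner uniform statement is: \emph{if $G$ is nontrivial and not unipotent, choose any finite-dimensional $G$-module $V$ with $V^G = 0$ and at least one fixed point (the origin of the linear action)}; then $X = V$ with $\bk[X]^G$ having no invariants in the irrelevant ideal yields non-observability.

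To make this uniform argument airtight, the step that needs the most care is guaranteeing a nontrivial finite-dimensional $G$-module $V$ with $V^G = 0$; this is exactly the assertion that a non-unipotent group admits a representation with no nonzero invariant vectors, which holds precisely because unipotent groups are characterized by the opposite property (every nonzero module has nonzero invariants), as already recorded in the second Example of the excerpt. Hence the cleanest route is to regard this proposition as the converse of that example: unipotence is equivalent to the invariant-fixed-point property of all modules, so negating unipotence produces a module $V$ with $V^G=0$, and the linear action on $X=V$ is then non-observable via the maximal ideal at the origin. I would structure the final write-up around this equivalence, isolating the structure-theoretic input (non-unipotent $\Rightarrow$ exists a module with no invariants) as the single nontrivial ingredient.
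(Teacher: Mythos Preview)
Your argument in the character case is fine, but the ``uniform'' extension to groups without characters has a genuine gap. You claim that for a nontrivial irreducible $G$-module $V$ (so $V^G=0$) the maximal ideal $\mathfrak m$ at the origin of $X=V$ has $\mathfrak m^G=(0)$. That is equivalent to $\bk[V]^G=\bk$, which is far stronger than $V^G=0$. For instance, take $G=\SL_2$ acting on $V=\Sym^2(\bk^2)$: this $V$ is irreducible with $V^G=0$, yet the discriminant is a nonconstant invariant lying in $\mathfrak m^G$. In fact this action is observable (the generic orbit, a nondegenerate form, is a closed fiber of the discriminant map), so it cannot serve as your counterexample. Your final paragraph repeats the error, asserting that any $V$ with $V^G=0$ yields a non-observable linear action via the maximal ideal at the origin; the same example refutes this.

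The paper avoids this trap by arguing through orbit closures rather than module invariants. It first observes that if every action of $G$ on an affine variety is observable, then every $G$-orbit on every affine $G$-variety is closed (apply observability to $\overline{\cO}$: the unique dense orbit must then be all of $\overline{\cO}$). It then uses the characterization of unipotent groups as precisely the connected groups with this ``all orbits closed'' property: if $G$ is not unipotent it contains a copy of $\bk^*$, and the induced space $G*_{\bk^*}\bk$ (affine since $\bk^*$ is reductive) has an open, non-closed $G$-orbit. If you want to salvage your representation-theoretic approach, aim for a non-closed orbit rather than an invariant-free module: embed $G$ in some $\GL(V)$, pick $v\in V$ on which your $\bk^*\subset G$ acts with a positive weight so that $0\in\overline{G\cdot v}\setminus G\cdot v$; then on the affine variety $\overline{G\cdot v}$ the boundary ideal is nonzero, $G$-stable, and has no nonzero invariant since $\bk\bigl[\overline{G\cdot v}\bigr]^G=\bk$.
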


\begin{proof}
We first observe that every $G$-orbit on an affine $G$-variety $X$ is
closed. Indeed, if $\cO\subset X$ is an orbit, then the action of $G$
on the affine variety $\overline{\cO}$ is observable, and hence $\cO$
is closed.

The result now follows from the fact that if an algebraic
group $G$ acts on any affine algebraic variety with closed orbits,
then $G$ is unipotent (see \cite{fer-uni}). We give a short proof of
this fact, for the sake of completeness.
  Assume  that $G$ contains a multiplicative subgroup
$\Bbbk^*$, and consider the induced space  $G*_{\Bbbk^*}
\Bbbk$, where $\Bbbk^*$ acts on $\Bbbk$ by multiplication. Then
$\cO\bigl([1,1]\bigr)=\bigl\{[g,a]\mathrel{:} g\in G\,,\ a\neq
0\bigr\}$ is an open orbit, and this is a contradiction. It follows
that $G$ is a unipotent algebraic group.
\end{proof}

Let $G$ be an affine algebraic group acting on the affine variety $X$. Then
it is well known that the categorical quotient does not necessarily
exists, even when $\Bbbk[X]^G$ is finitely generated. However, if
$\Bbbk[X]^G$ is finitely generated then $\Spec\bigl(\Bbbk[X]^G\bigr)$ satisfies
a universal property in the category of affine algebraic varieties.

\begin{definition}
\label{defi:affquot}
Let $G$ be an affine algebraic group acting on an affine variety
$X$, in such a way that $\Bbbk[X]^G$ is finitely generated. The
\emph{affinized quotient} of the action is the morphism $\pi:X\to
X\aq G=\Spec\bigl(\Bbbk[X]^G\bigr)$.

It is clear that $\pi$ satisfies the following universal property:

{\em Let $Z$ be an affine variety and $f:X\to Z$  a morphism constant
  on the $G$-orbits. Then
  there exists an unique $\widetilde{f}:X\aq G\to Z$ such that the
  following diagram is commutative.}
\begin{center}
\mbox{
\xymatrix{
X\ar@{^(->}[r]^-f\ar@{->}[d]&Z\\
X\aq G\ar@{^(->}[ur]_-{\widetilde{f}}&
}
}
\end{center}

Indeed, it is clear that the induced morphism $f^*:\Bbbk[Z]\to
\Bbbk[X]$ factors through $\Bbbk[X]^G$.
\end{definition}

Notice that the affinized quotient is the affinization of the
categorical quotient
when the latter exists, and $\Bbbk[X]^G$ is finitely
generated. Indeed, if $X\to X\cq G$ is the affine quotient then
$X\aq G = \operatorname{Spec}\bigl(\mathcal{O}(X\cq G)\bigr)$.

\begin{remark}
\label{rem:quotnotsurj}
It is clear that the morphism $\pi: X\to X\aq G$ is dominant. However, $\pi$ is
not necessarily surjective. Consider a semisimple group $G$
and its maximal unipotent subgroup $U$. Then $G/U=\pi(G/U) $ is known to be
a proper open subset of $G\aq U$.
\end{remark}

Let $G$ be an affine group acting on an affine variety $X$. It is
very easy to prove that  if $f\in \Bbbk[X]^G$, then
$\bigl(\Bbbk[X]^G\bigr)_f\cong \bigl(\Bbbk[X]_f\bigr)^G$. This
technical remark will allow us, combined with Propositions
\ref{prop:obserchar} and \ref{prop:grosshanloc} below, to study affinized
quotients of observable actions.

\begin{lemma}
Let $G$ be an affine algebraic group and $X$ an affine $G$-variety such that
$\Bbbk[X]^G$ is finitely generated. Let $\pi:X\to X\aq G$ be the
affinized quotient. Then for any $f\in \Bbbk[X]^G$, $\pi|_{X_f}:X_f\to
(X\aq G)_f$ is the affinized quotient.
\end{lemma}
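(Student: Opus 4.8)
The plan is to reduce everything to the localization identity stated just above the lemma, namely $\bigl(\Bbbk[X]^G\bigr)_f\cong\bigl(\Bbbk[X]_f\bigr)^G$, and then to check that the restricted morphism is literally the structure morphism of the affinized quotient of $X_f$. First I would observe that since $f$ is $G$-invariant, the principal open set $X_f$ is $G$-stable, hence an affine $G$-variety in its own right, with coordinate ring $\Bbbk[X_f]=\Bbbk[X]_f$. Consequently its ring of invariants is $\Bbbk[X_f]^G=\bigl(\Bbbk[X]_f\bigr)^G$, and by the localization identity recalled above this is isomorphic to $\bigl(\Bbbk[X]^G\bigr)_f$. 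Since $\Bbbk[X]^G$ is finitely generated by hypothesis, its localization $\bigl(\Bbbk[X]^G\bigr)_f$ is finitely generated as well; thus $\Bbbk[X_f]^G$ is finitely generated and the affinized quotient $\pi_f:X_f\to X_f\aq G$ exists.

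Next I would identify the target. By definition $X_f\aq G=\Spec\bigl(\Bbbk[X_f]^G\bigr)=\Spec\bigl(\bigl(\Bbbk[X]^G\bigr)_f\bigr)$. On the other hand, viewing $f$ as an element of $\Bbbk[X\aq G]=\Bbbk[X]^G$, we have $(X\aq G)_f=\Spec\bigl(\bigl(\Bbbk[X]^G\bigr)_f\bigr)$. Hence $X_f\aq G$ and $(X\aq G)_f$ are canonically identified, which already gives the two morphisms the same source and the same target.

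The only point requiring care --- and what I regard as the main, though modest, obstacle --- is to verify that under this identification $\pi|_{X_f}$ coincides with $\pi_f$, rather than merely sharing source and target. For this I would argue at the level of coordinate rings. The morphism $\pi_f$ corresponds to the composite inclusion $\bigl(\Bbbk[X]^G\bigr)_f\hookrightarrow\bigl(\Bbbk[X]_f\bigr)^G\hookrightarrow\Bbbk[X]_f$, which is precisely the localization at $f$ of the defining inclusion $\Bbbk[X]^G\hookrightarrow\Bbbk[X]$; and the restriction $\pi|_{X_f}$ corresponds to that same localized inclusion $\bigl(\Bbbk[X]^G\bigr)_f\hookrightarrow\Bbbk[X]_f$. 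Since both morphisms are induced by the identical ring homomorphism, they agree, and therefore $\pi|_{X_f}$ is the affinized quotient of $X_f$. The whole argument is thus bookkeeping around the localization identity; the substantive content is entirely contained in the isomorphism $\bigl(\Bbbk[X]_f\bigr)^G\cong\bigl(\Bbbk[X]^G\bigr)_f$ together with its compatibility with the two inclusions into $\Bbbk[X]_f$.
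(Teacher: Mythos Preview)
Your argument is correct and follows exactly the same approach as the paper, which reduces the lemma to the localization identity $\bigl(\Bbbk[X]^G\bigr)_f\cong\bigl(\Bbbk[X]_f\bigr)^G$; you have simply spelled out the bookkeeping (finite generation, identification of targets, equality of the ring maps) that the paper leaves implicit.
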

\begin{proof}
Indeed, $\bigl(\Bbbk[X]^G\bigr)_f\cong \bigl(\Bbbk[X]_f\bigr)^G$.
\end{proof}

\section{Observable actions and affinized quotients}

In this section we study observable actions. In
Theorem \ref{thm:obsercharac} we
provide a geometric characterization of the observability of an
action.  This characterization allows us to construct an open
subset of $X$
with affine geometric quotient, see Theorem \ref{thm:geoquot}.

We begin by given some equivalent condition for an action to be
observable.

\begin{lemma}
\label{lem:obsred}
Let $G$ be a connected  affine algebraic group acting on an affine
variety $X$,   and let $X=\bigcup_{i=1}^n X_i$ be the
decomposition of $X$ in irreducible
components. Then the action $G\times X\to X$ is observable if  and
only if
the restricted actions $G\times X_i\to X_i$ are observable for all
$i=1,\dots, n$.
\end{lemma}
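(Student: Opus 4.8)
The plan is to prove the equivalence by relating $G$-stable ideals of $\bk[X]$ to $G$-stable ideals of the coordinate rings $\bk[X_i]$ of the irreducible components. Since $X = \bigcup_{i=1}^n X_i$ is the decomposition into irreducible components and $G$ is connected, each $X_i$ is itself $G$-stable: the group $G$ permutes the components, but being connected it must fix each one setwise. This is the first observation I would record, as it justifies speaking of the restricted actions $G \times X_i \to X_i$ in the first place.

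For the forward direction, I would assume the action on $X$ is observable and fix an index $i$ together with a nonzero $G$-stable ideal $J \subset \bk[X_i]$. The natural move is to pull $J$ back along the restriction $\bk[X] \to \bk[X_i]$ (surjective, with kernel $\cI(X_i)$) to obtain a $G$-stable ideal $I \subset \bk[X]$. One must check that $I \neq (0)$, which holds because the quotient $\bk[X]/\cI(X_i) \cong \bk[X_i]$ receives $J$ nontrivially, so the preimage strictly contains $\cI(X_i)$ unless $X_i$ is the whole space. By observability of the action on $X$ we get a nonzero invariant $f \in I^G$; pushing $f$ forward to $\bk[X_i]$ and arguing that it lands in $J^G$ and is nonzero (using that the components are distinct and an invariant can be arranged not to vanish on $X_i$) yields $J^G \neq (0)$. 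The slightly delicate point here is ensuring the image in $\bk[X_i]$ is nonzero rather than collapsing; I expect one handles this by choosing $f$ supported on $X_i$, or by a direct argument using that $X_i \not\subset \bigcup_{j \neq i} X_j$.

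For the converse, assume each restricted action is observable and let $I \subset \bk[X]$ be a nonzero $G$-stable ideal. Since $I \neq (0)$, there is some component $X_i$ on which $I$ does not vanish identically, i.e.\ the image $\bar I$ of $I$ in $\bk[X_i] = \bk[X]/\cI(X_i)$ is a nonzero $G$-stable ideal. Observability of the action on $X_i$ gives a nonzero invariant $\bar f \in \bar I^G$. The task is then to lift $\bar f$ to an honest nonzero invariant in $I^G \subset \bk[X]^G$. This lifting is where I anticipate the main obstacle: an invariant function on $X_i$ need not extend to a global invariant on $X$, and even a global extension need not remain $G$-invariant. I would address this either by multiplying by a suitable invariant vanishing on the other components (exploiting that the components are distinct so such separating invariants can be built, perhaps after replacing $\bar f$ by a power or using that $G$ is connected so the product over the $G$-orbit structure is trivial), or by averaging techniques when they apply. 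The cleanest route is probably to use that for connected $G$ the product $\prod_{j \neq i} g_j$ of suitable defining-type invariants is itself invariant, reducing the lifting to an extension problem that the ideal structure makes tractable.

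In short, the proof is a standard component-by-component reduction, with the two genuine checks being (i) that connectedness of $G$ forces each $X_i$ to be $G$-stable, and (ii) the lifting of a nonzero invariant on a single component to a nonzero global invariant in the prescribed ideal. I expect (ii) to be the crux, and I would devote most of the care to constructing, from $\bar f \in \bar I^G$, a genuine element of $I^G$ that is still nonzero, leaning on the disjointness of components away from their pairwise intersections and on the invariance-preserving properties of connected group actions.
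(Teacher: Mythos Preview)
Your plan has the right shape, but both directions contain genuine gaps, and the converse gap is substantial.

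For the forward direction, the issue you flag is real: if you take $I$ to be the full preimage of $J$ in $\Bbbk[X]$, then $I \supset \cI(X_i)$, and a nonzero invariant $f \in I^G$ might lie entirely in $\cI(X_i)$, so its restriction to $X_i$ would be zero. The fix is simple and is exactly what the paper does: instead of the preimage of $J$, use the \emph{smaller} $G$-stable ideal $\cI\bigl(Y \cup \bigcup_{j\neq i} X_j\bigr)$, where $Y = \cV(J) \subsetneq X_i$. A nonzero invariant in this ideal vanishes on every $X_j$ with $j \neq i$, so it cannot vanish on $X_i$ as well (else it is identically zero). Its restriction to $X_i$ then lies in $\cI(Y)^G \setminus \{0\}$, and since it suffices to check observability on radical ideals (if $f \in \sqrt{J}^G$ is nonzero then some $f^n$ is a nonzero invariant in $J$), this finishes the argument.

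The converse is where your proposal falls short. You correctly identify lifting a nonzero $\bar f \in \bar I^G \subset \Bbbk[X_i]^G$ to a nonzero element of $I^G \subset \Bbbk[X]^G$ as the crux, but the devices you suggest (multiplying by an invariant vanishing on the other components, or ``averaging'') do not work here: there is no reason such a separating invariant exists in $\Bbbk[X]^G$ before you have proved observability on $X$, and there is no averaging available for a general affine $G$. The paper avoids lifting altogether. It embeds $A = \Bbbk[X]$ into $B = \prod_i \Bbbk[X_i]$ and considers the conductor-type sub-ideal $I_0 = \{f \in I : fB \subseteq A\}$. After localizing at a regular element $r \in A$ with $A_r = B_r$ one gets $(I_0)_r = I_r$, so $r^m I \subseteq I_0$ for some $m$ and hence $I_0 \neq (0)$. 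The point is that $I_0$ is already an ideal of the product ring $B$, so it decomposes as $\bigoplus_i I_i$ with each $I_i \subset \Bbbk[X_i]$ a $G$-stable ideal; observability on each component then gives $I_0^G = \bigoplus_i I_i^G \neq (0)$, and since $I_0 \subseteq I$ we conclude $I^G \neq (0)$. This conductor step is the idea your outline is missing.
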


\begin{proof}
Assume that the action of $G$ on $X$ is observable.
We prove without
  loss of generality that $G\times X_1\to X_1$ is an observable
  action. Let $Y\subsetneq X_1$ be a $G$-stable closed
  subset. Then
  $Z=Y\cup \bigcup_{i=2}^n X_i$ is a $G$-stable closed subset,
  $Z\subsetneq X$. Since the action $G\times X\to X$ is observable,
  there exists $f\in \bigl(\mathcal I
  (Z)^G\setminus \{0\}\bigr)\subset \Bbbk[X]$. It follows that
  $f|_{X_1}\neq
  0$, and hence
  $f\in \bigl(\mathcal I(Y)^G\setminus \{0\}\bigr) \subset
  \Bbbk[X_1]$. Thus,
  the restricted action $G\times X_1\to X_1$ is observable.

For the converse, consider the morphism
$
\pi : \bigsqcup_i X_i\to X$.
Then $\pi$ induces an injective morphism
$A=\Bbbk[X]\to\Pi_i\Bbbk[X_i]=\Pi_iB_i=B$. Let
$I\subseteq\Bbbk[X]$ be a nonzero $G$-ideal. Consider $I_0=\{f\in I\;|\;fB\subseteq A\}$.
One checks easily that $I_0$ is a $G$-ideal of $B$. Now there is a regular element
$r\in A$ such that $A_r\cong B_r = \Pi_i (B_i)_{r_i}$, where $r_i=r|_{X_i}$. One checks that
\[
(I_0)_r = \{f\in I_r\;|\; fB_r\subseteq A_r\} = I_r
\]
since $A_r=B_r$. Thus there exists $m>0$ such that $r^mI\subseteq I_0$. In
particular, $I_0\neq (0)$. But $I_0=\bigoplus_iI_i$,
where $I_i\subseteq\Bbbk[X_i]$ is a $G$-ideal.  Hence
$I_0^G=\bigoplus_iI_i^G\neq (0)$.
But then $I^G\neq (0)$.
\end{proof}

\begin{proposition}
\label{prop:obserchar}
Let $G$ be a connected  affine algebraic group acting on an
 affine
variety $X$. Then the following are equivalent

\begin{enumerate}
\item The action is observable.
\item For any nonempty $G$-stable open subset $U\subset X$, there
  exists $f\in \Bbbk[X]^G\setminus \{0\} $ such that $X_f\subset U$.
\item For every $f\in \Bbbk[X]^G$, the action of $G$ on
  $X_f$ is
  observable.
\item There exists $f\in \Bbbk[X]^G\setminus \{0\}$ such that $f$ is
  not a zero divisor and  that the
  action of $G$ on $X_f$ is
  observable.
\item For every $G$-stable nonzero prime
ideal $\mathcal P$ of $\Bbbk[X]$, $\mathcal P^G\neq (0)$.
\item For every nonempty $G$-stable affine open subset $U$, the
  restriction $G\times U\to U$ is an observable action.
\end{enumerate}
\end{proposition}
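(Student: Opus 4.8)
The goal is to prove the equivalence of six conditions characterizing observability. I would prove this by establishing a cycle of implications, supplemented by a couple of direct equivalences where the cyclic structure is awkward. The cleanest backbone is a chain

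$$(1)\Rightarrow(2)\Rightarrow(3)\Rightarrow(4)\Rightarrow(1),$$

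treating conditions (5) and (6) as satellites that plug into this cycle. Let me sketch each arrow.

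**The cycle through (1)–(4).** For $(1)\Rightarrow(2)$: given a nonempty $G$-stable open $U\subset X$, its complement $Y=X\setminus U$ is a proper $G$-stable closed subset, so $\mathcal I(Y)$ is a nonzero $G$-stable ideal. Observability yields a nonzero $f\in\mathcal I(Y)^G$, and since $f$ vanishes on $Y$ we get $X_f\subseteq U$. For $(2)\Rightarrow(3)$: fix $f\in\Bbbk[X]^G$ and let $V\subset X_f$ be a nonempty $G$-stable open subset of $X_f$; since $X_f$ is open in $X$, $V$ is also $G$-stable open in $X$, so by (2) there is $h\in\Bbbk[X]^G\setminus\{0\}$ with $X_h\subseteq V$. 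Using the identification $(\Bbbk[X]^G)_f\cong(\Bbbk[X]_f)^G$ recalled in the excerpt, I would transfer $h$ to an invariant on $X_f$ that witnesses observability there; I would phrase this via condition (2) applied on $X_f$, so really this step shows (2)-for-$X$ implies (2)-for-$X_f$, which is observability of $X_f$. For $(3)\Rightarrow(4)$: take $f=1$, which is invariant, not a zero divisor, and gives $X_f=X$ itself observable—so (4) is immediate, with the genuine content sitting in $(4)\Rightarrow(1)$.

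**The key arrow $(4)\Rightarrow(1)$.** This is where I expect the main obstacle to lie. We are given a single non-zero-divisor invariant $f$ with $X_f$ observable, and we must recover observability of all of $X$. Let $I\subset\Bbbk[X]$ be a nonzero $G$-stable ideal. Localizing at $f$, the ideal $I_f\subset\Bbbk[X]_f$ is $G$-stable; the point is to argue it is nonzero, which is exactly where the non-zero-divisor hypothesis on $f$ is essential, since then $a\mapsto a/1$ is injective on $\Bbbk[X]$ and $I_f\neq(0)$. Observability of $X_f$, expressed via the localization isomorphism, produces a nonzero invariant in $(I_f)^G=(I^G)_f$; clearing the denominator (a power of $f$, which is invariant) returns a nonzero element of $I^G$. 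The delicate points to check carefully are that $(I^G)_f=(I_f)^G$ as submodules of $\Bbbk[X]_f$ and that clearing denominators keeps us inside $I^G$ and does not kill the element—both hinge on $f$ being a non-zero-divisor and invariant.

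**The satellites (5) and (6).** For the equivalence with (5): clearly $(1)\Rightarrow(5)$ since a prime ideal is a special $G$-stable ideal. For $(5)\Rightarrow(1)$, given a nonzero $G$-stable ideal $I$, I would take a minimal prime $\mathcal P$ over $I$; by a standard argument (the set of minimal primes over a $G$-stable ideal is permuted by the connected group $G$, hence each is fixed, using connectedness of $G$ as in the hypotheses), $\mathcal P$ is itself $G$-stable, and a nonzero invariant in $\mathcal P^G$ can be multiplied into $I$—or one argues directly that $\sqrt I$ is $G$-stable and reduces to the radical case, where $\mathcal I(Y)$ for an irreducible component works. For condition (6): $(1)\Rightarrow(6)$ reduces to checking that a $G$-stable affine open $U$ inherits observability, which follows from (3) once one knows affine $G$-stable opens in this setting are of the form $X_f$ up to the arguments already in play; and $(6)\Rightarrow(1)$ is immediate by taking $U=X$. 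Throughout, connectedness of $G$ is used to ensure $G$ fixes (rather than merely permutes) the relevant irreducible components and minimal primes, and I would flag the minimal-prime $G$-stability argument as the second most technical point after $(4)\Rightarrow(1)$.
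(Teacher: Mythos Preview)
Your backbone cycle $(1)\Rightarrow(2)\Rightarrow(3)\Rightarrow(4)\Rightarrow(1)$ is sound, and your direct argument for $(4)\Rightarrow(1)$ via localization is in fact tidier than the paper's route, which detours through (5): the paper proves $(4)\Rightarrow(5)$ by localizing a prime at $f$ and pulling an invariant back, then $(5)\Rightarrow(1)$ by writing $\sqrt{I}=\mathcal P_1\cap\dots\cap\mathcal P_s$ with each $\mathcal P_i$ $G$-stable (using connectedness), choosing $f_i\in\mathcal P_i^G\setminus\{0\}$, and noting that some power of $\prod f_i$ lies in $I^G$. Your single localization step collapses these two.

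There is, however, a genuine gap in your treatment of $(1)\Rightarrow(6)$. You write that it ``follows from (3) once one knows affine $G$-stable opens in this setting are of the form $X_f$.'' That premise is false: affine open subsets, even $G$-stable ones, need not be principal. The paper does not assume this. Instead, given an arbitrary $G$-stable affine open $U\subset X$, it uses (2) to produce $f\in\Bbbk[X]^G\setminus\{0\}$ with $X_f\subset U$; then $U_f=U\cap X_f=X_f$, and the already-established (3) (or (4)) shows the action on $X_f=U_f$ is observable. This verifies condition (4) \emph{for the variety $U$}, with invariant $f|_U\in\Bbbk[U]^G$, and since $(4)\Rightarrow(1)$ is already in hand, $U$ is observable. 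The point is not that $U$ is principal in $X$, but that $U$ contains a principal open of $X$ which is simultaneously a principal open of $U$.

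A minor looseness: in your $(5)\Rightarrow(1)$, the phrase ``a nonzero invariant in $\mathcal P^G$ can be multiplied into $I$'' does not make sense as stated, since a minimal prime $\mathcal P$ over $I$ satisfies $\mathcal P\supseteq I$, not the reverse. Your alternative---pass to $\sqrt I$, take one invariant from each minimal prime, multiply, then raise to a power---is exactly what the paper does and is correct; drop the first formulation.
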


\begin{proof}
We prove first the equivalence of conditions (1) -- (6) in the case
when $X$ is an irreducible variety.

The implication (1) $\Longleftrightarrow$ (2) is trivial.

To prove that (1) $\Longrightarrow$ (3),  let $f\in
\Bbbk[X]^G$,   and let $I\subset
\Bbbk[X]_f$ be a nonzero $G$-stable ideal. Then $J=I\cap
\Bbbk[X]\subset \Bbbk[X]$ is a
nonzero $G$-stable ideal. If we let $h\in J^G\setminus\{0\}$
then $h=h/1\in I^G$.

The implication (3) $\Longrightarrow$ (4) is trivial.

In order to prove that (4) $\Longrightarrow$ (5), let $\mathcal
P\subset \Bbbk[X]$ be a nonzero prime
$G$-stable ideal. If $f\in \mathcal P$ there is nothing to prove. If
$f\notin \mathcal P$, let  $\mathcal P_f=\mathcal P\Bbbk [X]_f$ be the
prime ideal generated by $\mathcal P$. Since $f\in \Bbbk[X]^G$, it
follows that $\mathcal P_f$ is $G$-stable; let $h/f^n\in \mathcal
P_f^G\setminus \{0\}$. Then $0\neq h= f^m(h/f^m)\in \bigl( \mathcal P_f\cap
\Bbbk[X]\bigr)^G= \mathcal P^G$.

To prove that (5) $\Longrightarrow$ (1),  let $I\subset
\Bbbk[X]$ be a nonzero
$G$-stable ideal. Then $\mathcal V(I)\subset X$ is a $G$-stable
variety. Since $G$ is connected, every irreducible component of
$\mathcal V(I)$ is $G$-stable, and hence   $r(I)$, the
radical of $I$, is an intersection of $G$-stable prime ideals
$r(I)=\mathcal
P_1\cap\dots \cap \mathcal P_s$. Let $f_i\in \mathcal P_i^G\setminus \{0\}$,
$i=1,\dots, s$. Then $0\neq f=\prod f_i\in r(I)^G$, and thus there
exists $n\geq 0$ such that $0\neq f^n\in I^G$.

In order to prove (1) $\Longrightarrow$ (6),   let $U\subset X$ be a
nonempty $G$-stable
affine open subset. Then by (2) there exists $f\in \Bbbk[X]^G$ such that
$X_f\subset U$. By (4)  it follows that the restriction $G\times
X_f\to X_f$ is observable. Since $U_f=X_f$ it follows that assertion  (4) holds for the action of $G$ on $U$,
i.e.~ that the action on $U$ is observable.

Finally, if (6) holds then the action of $G$ on $X_f$ is observable
for every $f\in \Bbbk[X]^G$. In particular, assertion (3) holds.

Assume now that $X$ is reduced and let $X=\bigcup_i X_i$ be the
decomposition in irreducible components.

The implications (1) $\Longleftrightarrow$ (2) and  (3)
$\Longrightarrow$ (4) are trivial.

To prove that (1) $\Longrightarrow$ (3), just observe that
$X_f=\bigcup_i(X_i)_{f|_{X_i}}$, and apply Lemma \ref{lem:obsred}.

Finally, the remaining implications are proved following the same
reasoning as in the
irreducible case.
\end{proof}

We recall now two useful results that we will need in what
follows.

\begin{proposition}[Grosshans]
\label{prop:grosshanloc}
Let $G$ be an affine algebraic group acting on an affine variety. Then
there exists $f\in \Bbbk[X]^G\setminus \{0\}$ such that
$\bigl(\Bbbk[X]_f\bigr)^G\cong \bigl(\Bbbk[X]^G\bigr)_f$ is
finitely generated.
\end{proposition}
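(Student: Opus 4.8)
The plan is to reduce the statement to a piece of commutative algebra about dominant morphisms, the isomorphism $(\Bbbk[X]^G)_f\cong(\Bbbk[X]_f)^G$ being the elementary remark already recorded above. First I would reduce to the case that $X$ is irreducible: since $G$ is connected it preserves the irreducible components, and by the argument of Lemma \ref{lem:obsred} (passing through $\Bbbk[X]\hookrightarrow\prod_i\Bbbk[X_i]$ and clearing a common denominator) it suffices to produce, for each component $X_i$, an invariant $f_i$ making $(\Bbbk[X_i]^{G})_{f_i}$ finitely generated. So assume $X$ irreducible and write $R=\Bbbk[X]$, a finitely generated domain, $B=\Bbbk[X]^G\subseteq R$, and $L=\Bbbk(X)=[R]$.

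Next I would fix a convenient finitely generated subalgebra of $B$. The field $K:=[B]$ is a subfield of the finitely generated field extension $L/\Bbbk$, hence is itself finitely generated over $\Bbbk$. Choosing finitely many invariants $a_1,\dots,a_m\in B$ whose quotients generate $K$, set $A=\Bbbk[a_1,\dots,a_m]\subseteq B$; this is a finitely generated $\Bbbk$-subalgebra with $[A]=[B]=K$. For any $0\neq f\in A$ one has $A_f\subseteq B_f\subseteq K\cap R_f$, the intersection being taken inside $L$ and $R_f$ regarded as a subring of $L$; so the whole statement follows once I find $f$ with $K\cap R_f=A_f$, since this squeezes $B_f=A_f$, which is finitely generated.

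Then comes the geometric heart. Write $A=\Bbbk[V]$, so the inclusion $A\subseteq R$ is a dominant morphism $\varphi:X\to V$ with $\varphi^{*}\Bbbk(V)=K$, and identify $K$ with its image in $L$. By Chevalley's theorem $\varphi(X)$ contains a dense open subset, and the normal locus of $V$ is open and nonempty, so I may choose $0\neq f\in A$ with $V_f\subseteq\varphi(X)$ and $V_f$ normal. I claim $K\cap R_f=A_f$, i.e. that a function $g\in\Bbbk(V)$ whose pullback $\varphi^{*}g$ is regular on $\varphi^{-1}(V_f)$ is already regular on $V_f$. If not, then since $V_f$ is normal the polar locus of $g$ contains a prime divisor $D\subseteq V_f$, along which $1/g$ is regular and vanishes; pick a closed point $p\in D$ at which $1/g$ is regular with $(1/g)(p)=0$ and, using $D\subseteq V_f\subseteq\varphi(X)$, a point $w\in X$ with $\varphi(w)=p$. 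Then $w\in\varphi^{-1}(V_f)$, so $\varphi^{*}g$ is regular at $w$, and the relation $\varphi^{*}g\cdot\varphi^{*}(1/g)=1$ evaluated at $w$ reads $\varphi^{*}g(w)\cdot 0=1$, which is absurd. Hence $K\cap R_f=A_f$ and $(\Bbbk[X]^{G})_f=B_f=A_f$ is finitely generated.

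The main obstacle is this last step: making precise that a dominant morphism reflects regularity of functions pulled back from the base after passing to a suitable principal open. The two delicate points are the use of normality (to guarantee that failure of regularity is detected along a codimension-one subvariety, where the $1/g$ argument applies) and of Chevalley's theorem (to ensure that this divisor actually meets the image of $\varphi$); the reduction to irreducible $X$ and the finite generation of the subfield $K$ are routine.
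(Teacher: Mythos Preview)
The paper does not actually prove this proposition: its entire proof is the one-line citation ``See \cite[Thm.~1]{kn:grosshanslocal}.'' So there is no argument in the paper to compare against; you are supplying the proof that the authors outsourced to Grosshans.

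Your main argument, for irreducible $X$, is correct and is in fact essentially Grosshans' original proof: pick a finitely generated subalgebra $A\subseteq B=\Bbbk[X]^G$ with the same fraction field $K$, pass to the dominant morphism $\varphi:X\to V=\Spec A$, shrink to a principal open $V_f$ that is normal and lies in $\varphi(X)$, and then verify $K\cap R_f=A_f$ by the pole/zero contradiction along a prime divisor. The use of normality (so that failure of regularity is visible in codimension one) and of Chevalley's theorem (so that the offending divisor meets the image) are exactly the two ingredients Grosshans uses. The squeeze $A_f\subseteq B_f\subseteq K\cap R_f=A_f$ is clean.

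One small gap to flag: in your reduction to irreducible $X$ you write ``since $G$ is connected it preserves the irreducible components,'' but the proposition as stated does not assume $G$ connected (and the paper later applies it without that hypothesis). This is harmless for the heart of the argument, which never uses connectedness of $G$; the reduction step just needs a minor adjustment. For instance, one may first replace $G$ by $G^0$, obtain $f$ with $(\Bbbk[X]^{G^0})_f$ finitely generated, and then observe that $(\Bbbk[X]^G)_{Nf}$, where $Nf=\prod_{\gamma\in G/G^0}\gamma\cdot f\in\Bbbk[X]^G$, sits inside the finitely generated ring $(\Bbbk[X]^{G^0})_{Nf}$ as the fixed ring of a finite group, hence is itself finitely generated. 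Alternatively, simply restrict the proof to irreducible $X$, which is the only case the paper ever uses.
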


\begin{proof}
See \cite[Thm.~1]{kn:grosshanslocal}.
\end{proof}

\begin{theorem}[Rosenlicht \cite{Ro56, Ro63}]
\label{thm:rosenlicht}
Let $G$ be an algebraic group and $X$  an irreducible
 $G$-variety. Then there exists a $G$-stable  open
subset $\emptyset\neq X_0\subset X$ such that
 the action of $G$ restricted to $X_0$ has a  geometric quotient.
\end{theorem}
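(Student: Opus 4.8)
The plan is to reduce the existence of a geometric quotient to a statement about invariant \emph{rational} functions: on a suitable $G$-stable open set the fibres of the natural map to a model of $\Bbbk(X)^G$ should be exactly the $G$-orbits. First I would record that $\Bbbk(X)^G$ is a finitely generated field extension of $\Bbbk$, being an intermediate field of the finitely generated extension $\Bbbk(X)/\Bbbk$. Choosing a finitely generated $\Bbbk$-subalgebra whose quotient field is $\Bbbk(X)^G$, let $Y$ be an affine model with $\Bbbk(Y)\cong\Bbbk(X)^G$; the inclusion $\Bbbk(X)^G\hookrightarrow\Bbbk(X)$ then defines a dominant rational map $\pi\colon X\dashrightarrow Y$ with $\pi^*\Bbbk(Y)=\Bbbk(X)^G$. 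Since the chosen generators are $G$-invariant, the domain of definition of $\pi$ is a $G$-stable open subset on which $\pi$ is constant on orbits. Replacing $X$ by this open set, I may assume $\pi$ is a $G$-invariant dominant morphism.

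Next I would put $\pi$ into good shape using the standard generic-structure theorems. By Chevalley's theorem the image is constructible, so after shrinking $Y$ to an open subset and replacing $X$ by its preimage I may assume $\pi$ is surjective; by generic flatness I may further assume $\pi$ is flat, so that all its fibres have pure dimension $\dim X-\dim Y$. By upper semicontinuity of orbit dimension I also pass to a $G$-stable open set on which every orbit has one and the same maximal dimension $r$. All these open sets can be chosen $G$-stable. At this point one direction of the dimension count is immediate: each fibre $\pi^{-1}(y)$ is $G$-stable and contains orbits of dimension $r$, so $\dim X-\dim Y\ge r$, that is, $\operatorname{tr.deg}_\Bbbk\Bbbk(X)^G\le\dim X-r$.

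The heart of the matter is the reverse inequality, equivalently the assertion that invariant rational functions \emph{separate} generic orbits, so that the generic fibre of $\pi$ is a single orbit. I would isolate this as a key lemma: if $G$ acts on an irreducible variety with no nonconstant invariant rational function, then $G$ has a dense orbit. Granting the lemma, one applies it to the generic fibre of $\pi$. Because $\pi^*\Bbbk(Y)=\Bbbk(X)^G$, the field of invariants of that fibre contributes no new transcendentals over $\Bbbk(Y)$, so the generic fibre carries no nonconstant invariants and hence, by the lemma, a dense orbit; spreading this out over a further open subset of $Y$ shows that the fibres of $\pi$ over a dense open set are single orbits. This step — producing enough invariants to realise the orbits as fibres, i.e.\ the proof of the key lemma together with its relativisation over the (non-closed) function field $\Bbbk(Y)$ — is the main obstacle; it is precisely the content of Rosenlicht's argument, and the elementary dimension bound above does not suffice to force equality.

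Finally, let $X_0\subseteq X$ be the $G$-stable open set over which $\pi$ is surjective, flat, and has the orbits as its fibres. I would verify the axioms of a geometric quotient for $\pi\colon X_0\to\pi(X_0)$: the fibres are precisely the orbits by the previous step; $\pi$ is open, being flat and of finite type; and the structure sheaf of the image coincides with $(\pi_*\mathcal O_{X_0})^G$, since $\pi^*\Bbbk(\pi(X_0))=\Bbbk(X)^G$ and the base may be taken normal after a final shrinking. This produces the desired nonempty $G$-stable open subset $X_0\subseteq X$ on which the action admits a geometric quotient.
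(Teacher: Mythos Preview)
The paper does not actually prove this theorem: its ``proof'' is the single line ``See for example \cite[\S 13.5]{fer-ritt}.'' So there is no argument in the paper to compare yours against; Rosenlicht's theorem is quoted as background.

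That said, your outline follows the classical route that one finds in Rosenlicht's original papers and in the cited textbook: build an affine model $Y$ of $\Bbbk(X)^G$, realise $\pi\colon X\dashrightarrow Y$ as a $G$-invariant morphism after shrinking, use generic flatness and upper semicontinuity of orbit dimension to arrange that all fibres are equidimensional unions of maximal-dimensional orbits, and then show that the generic fibre is a single orbit. You are right to flag the separation step as the crux. Two remarks on that step: first, the ``key lemma'' as you state it (no nonconstant invariant rational function $\Rightarrow$ dense orbit) is exactly what has to be proved, and its proof---via a graph/translate argument comparing $G\cdot x$ with the locus where finitely many invariants agree---is the substantive content you have not supplied; second, applying it to the generic fibre means working over the non-closed field $\Bbbk(Y)$, so in practice one either passes to $\overline{\Bbbk(Y)}$ and then spreads out, or argues directly on a fibre over a general closed point. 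Your final verification of the geometric-quotient axioms also hides a point: the equality $\mathcal O_{Y}=(\pi_*\mathcal O_{X_0})^G$ does not follow from $\Bbbk(Y)=\Bbbk(X)^G$ and normality of $Y$ alone---one needs, for instance, that $\pi$ is affine (or at least universally submersive with integral invariant pushforward), which is obtained by further shrinking $Y$ to a basic affine open and arranging $X_0$ to be affine over it. With these caveats your plan is the standard one and would succeed once fleshed out.
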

\begin{proof}
See for example \cite[\S 13.5]{fer-ritt}.
\end{proof}

If $G\times X\to X$ is an observable action then the field of
invariant rational functions is generated by invariant regular
functions (see Theorem \ref{thm:obsercharac} below).
In \cite{kn:popvin} those authors provide some useful sufficient
conditions
to guarantee that $\bigl[\Bbbk[X]\bigr]^G=\bigl[\Bbbk[X]^G\bigr]$.

The following theorem may be regarded as a generalization of ``Igusa's
criterion''. See \cite{kn:igusa} and
\cite[\S 4.5]{kn:popvin}.

\begin{theorem} \label{popvin.thm}
Let $G$ be an algebraic group acting on an affine algebraic variety
$X$. Assume that either

\noindent (a) $G^0$, the connected component of $1$, is solvable, or

\noindent (b) $\Bbbk[X]$ is factorial.

Then every rational function $f\in\bigl[\Bbbk[X]\bigr]^G$ is the
quotient of two semiinvariant regular functions. In
other words, there exist  $h,g\in \Bbbk[X]$ and $\chi \in \mathcal
X(G)$   such that $f=\frac{g}{h}$,  $a\cdot g=\chi(a) g$ and $a\cdot
h=\chi(a)h$ for all $a\in G$.

Moreover, if $G$ has no nontrivial character (that is, if
$\Bbbk[G]^* =\Bbbk^*$), then $\bigl[\Bbbk[X]\bigr]^G=\bigl[\Bbbk[X]^G\bigr]$.
\end{theorem}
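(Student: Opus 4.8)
The plan is to prove Theorem~\ref{popvin.thm} in two stages, first establishing the ``quotient of two semiinvariants'' statement under either hypothesis (a) or (b), and then deducing the ``moreover'' clause.

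For the main statement, I would start with an arbitrary $f\in\bigl[\Bbbk[X]\bigr]^G$ and consider the ideal of denominators
\[
D_f=\bigl\{h\in\Bbbk[X]\mathrel{:} hf\in\Bbbk[X]\bigr\}\cup\{0\}.
\]
This is a nonzero ideal of $\Bbbk[X]$, and the key observation is that because $f$ is $G$-invariant, $D_f$ is $G$-stable: if $h\in D_f$ then $(a\cdot h)f=a\cdot(hf)\in\Bbbk[X]$ for all $a\in G$. Now the strategy diverges according to the hypothesis. Under (b), $\Bbbk[X]$ is factorial, so the ideal $D_f$ has a well-defined greatest common divisor $h$ of its elements, and one can write $f=g/h$ with $g,h$ having no common factor. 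The $G$-stability of $D_f$ forces $a\cdot h$ to be an associate of $h$ for each $a\in G$, i.e.\ $a\cdot h=\chi(a)h$ for a scalar $\chi(a)\in\Bbbk^*$; one checks $\chi$ is a character of $G$, and then $a\cdot g=(a\cdot h)(a\cdot f)=\chi(a)hf=\chi(a)g$, so $g$ is semiinvariant with the same character. Under (a), when $G^0$ is solvable, I would instead invoke the structure of the $G$-module generated inside $\Bbbk[X]$ by a suitable finite-dimensional $G$-stable subspace $V\subset D_f$: since $G^0$ is solvable and connected, the Lie--Kolchin theorem provides a common eigenvector for $G^0$ in $V$, i.e.\ a semiinvariant $h\in D_f$, and again $g=hf$ is semiinvariant with the matching character.

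The main obstacle is the passage from ``$a\cdot h$ is proportional to $h$ for each individual $a$'' to ``$h$ is a genuine semiinvariant with a regular character $\chi$.'' In the factorial case this requires care in choosing $h$ canonically (up to scalar) as the gcd so that the proportionality constants $\chi(a)$ are unambiguous, and then verifying that $a\mapsto\chi(a)$ is a morphism $G\to\Bbbk^*$, hence lies in $\mathcal X(G)$. In the solvable case the obstacle is ensuring that a single common eigenvector for $G^0$ can be upgraded to a semiinvariant for all of $G$; here one uses that $G/G^0$ is finite and averages or twists the eigenvector appropriately, tracking that the eigencharacter extends to $G$.

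For the ``moreover'' clause, suppose $\Bbbk[G]^*=\Bbbk^*$, so that $G$ has no nontrivial character. Given $f\in\bigl[\Bbbk[X]\bigr]^G$, write $f=g/h$ with $g,h$ semiinvariant of common character $\chi$ as above. Since every character of $G$ is trivial, $\chi=1$, whence $a\cdot g=g$ and $a\cdot h=h$ for all $a\in G$; that is, $g,h\in\Bbbk[X]^G$. Therefore $f=g/h\in\bigl[\Bbbk[X]^G\bigr]$, giving the inclusion $\bigl[\Bbbk[X]\bigr]^G\subseteq\bigl[\Bbbk[X]^G\bigr]$. The reverse inclusion $\bigl[\Bbbk[X]^G\bigr]\subseteq\bigl[\Bbbk[X]\bigr]^G$ is immediate, since a quotient of two $G$-invariant regular functions is a $G$-invariant rational function. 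This yields the desired equality $\bigl[\Bbbk[X]\bigr]^G=\bigl[\Bbbk[X]^G\bigr]$.
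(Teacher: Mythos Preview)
The paper does not actually prove this theorem: its entire proof is the citation ``See \cite[Thm.~3.3]{kn:popvin}.'' Your sketch therefore goes beyond what the paper provides, and it follows the standard Popov--Vinberg argument via the ideal of denominators $D_f$. The overall strategy and the ``moreover'' clause are correct. Two technical points deserve more care than you give them. In case~(b), writing $f=g/h$ in lowest terms yields $a\cdot h=u(a)\,h$ with $u(a)\in\Bbbk[X]^*$, but a factorial ring may have nonconstant units, so you still owe an argument that $u$ is (or can be adjusted to) a genuine character $G\to\Bbbk^*$ rather than a cocycle into $\Bbbk[X]^*$. In case~(a), your ``average or twist'' can be made concrete: if $h_0\in D_f$ is a $G^0$-eigenvector supplied by Lie--Kolchin and $a_1,\dots,a_n$ represent the cosets of $G^0$ in $G$, then $h=\prod_i(a_i\cdot h_0)$ lies in $D_f$, is nonzero since $\Bbbk[X]$ is a domain, and a direct computation shows it is a $G$-semiinvariant; then $g=hf$ is semiinvariant of the same weight.
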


\begin{proof}
See \cite[Thm.~3.3]{kn:popvin}.
\end{proof}

\begin{definition}
It is well known that the set of points whose orbit has maximal
dimension is an open subset, we denote it by
\[
X_{\max}=\bigl\{ x\in X\mathrel{:} \dim \cO(x)\geq \dim \cO(y) \
\forall \, y\in X\bigr\}.
\]
We let
\[
\Omega(X)=\bigl\{ x\in X\mathrel{:} \dim \cO(x) \text{ is maximal
and } \overline{\cO(x)}=\cO(x)\bigr\}.
\]
\end{definition}

\begin{theorem}
\label{thm:niceopensubset}
Let $G$ be an affine algebraic group acting on an affine variety $X$,
such that $\Bbbk[X]^G$ is finitely generated, and let
$\pi:X\to X\aq G$ be the affinized quotient. Assume  that

(1)  $\bigl[\Bbbk[X]^G\bigr]= \bigl[\Bbbk[X]\bigr]^G$.

Then

(2) There exists a nonempty open subset $W\subset X\aq G$ such that
  $\pi^{-1}(x)=\overline{\cO(y)}$, with $y\in X_{\max}$, for all $x\in
  W$. Moreover, $W$ can be taken of the form $(X\aq G)_f=X_f\aq G$, with $f\in
\Bbbk[X]^G$.

Conversely, if (2) holds then  $\bigl[\Bbbk[X]^G\bigr]\subset
\bigl[\Bbbk[X]\bigr]^G$ is a purely inseparable finite extension. In
particular, if $\operatorname{char} \Bbbk=0$, then
$\bigl[\Bbbk[X]^G\bigr]= \bigl[\Bbbk[X]\bigr]^G$.
\end{theorem}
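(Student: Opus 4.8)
The plan is to prove both implications by comparing the affinized quotient $\pi\colon X\to X\aq G$ with a Rosenlicht quotient supplied by Theorem \ref{thm:rosenlicht}. Fix a dense $G$-stable open subset $X_0\subseteq X_{\max}$ on which the restricted action admits a geometric quotient $\rho\colon X_0\to Q$; since $X_0$ is dense, $\Bbbk(Q)=\Bbbk(X_0)^G=\bigl[\Bbbk[X]\bigr]^G$. Write $r=\dim\cO(y)$, $y\in X_{\max}$, for the generic orbit dimension, so $\dim Q=\dim X-r$. The restriction of functions $\Bbbk[X]^G\hookrightarrow\Bbbk[X_0]^G=\cO(Q)\subseteq\Bbbk(Q)$ exhibits $\bigl[\Bbbk[X]^G\bigr]$ as a subfield of $\Bbbk(Q)$, hence induces a dominant rational map $\phi\colon Q\dashrightarrow X\aq G$ whose comorphism is the inclusion $\bigl[\Bbbk[X]^G\bigr]\subseteq\bigl[\Bbbk[X]\bigr]^G$, and by construction $\pi|_{X_0}=\phi\circ\rho$ as rational maps. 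Everything then reduces to understanding $\phi$.

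First I would treat (1) $\Rightarrow$ (2). Under (1) the comorphism $\phi^*$ is an isomorphism, so $\phi$ is birational; choosing a basic open set $W=(X\aq G)_f=X_f\aq G$ (via the identification $(\Bbbk[X]^G)_f\cong(\Bbbk[X]_f)^G$) small enough, $\phi$ restricts to an isomorphism onto an open subset of $Q$, and for $x\in W$ I obtain $\pi^{-1}(x)\cap X_0=(\pi|_{X_0})^{-1}(x)=\rho^{-1}(\phi^{-1}(x))=\cO(y)$, a single orbit with $y\in X_{\max}$, of dimension $r$. Since $\dim(X\aq G)=\dim X-r$, after shrinking $W$ I may assume $\pi^{-1}(x)$ is equidimensional of dimension $r$, so $\overline{\cO(y)}$ is one of its irreducible components. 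Any other component $C$ meets the $G$-stable open set $X_0$ in an open subset of $C$; were this nonempty it would be dense in $C$, forcing $C\subseteq\overline{\cO(y)}$, so in fact $C\subseteq X\setminus X_0$. As $\pi|_{X\setminus X_0}$ has generic fibre dimension at most $(\dim X-1)-(\dim X-r)=r-1$, a further shrinking of $W$ removes all such $C$, leaving $\pi^{-1}(x)=\overline{\cO(y)}$, which is exactly (2).

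For the converse I would assume (2). The generic fibre of $\pi$ is then $\overline{\cO(y)}$ of dimension $r$, whence $\dim(X\aq G)=\dim X-r=\dim Q$; thus $\bigl[\Bbbk[X]^G\bigr]\subseteq\bigl[\Bbbk[X]\bigr]^G$ is algebraic, and being an extension of finitely generated fields, finite. It remains to show it is purely inseparable, i.e.\ that $\phi$ is generically injective. Here (2) is decisive: a point $q\in Q$ is an orbit $\cO(y)\subseteq X_0$, and two such orbits with a common image $x=\phi(q)$ both lie in the fibre $\pi^{-1}(x)=\overline{\cO(y')}$; since an orbit closure contains a unique orbit of maximal dimension $r$, the two orbits coincide, so $\phi$ is injective over a dense open set. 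A dominant, generically injective morphism of varieties has separable degree one, so the extension is purely inseparable; in characteristic zero it is therefore trivial, giving $\bigl[\Bbbk[X]^G\bigr]=\bigl[\Bbbk[X]\bigr]^G$.

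The main obstacle is the bookkeeping in the forward direction: promoting the birational identification $\phi$ to an honest isomorphism over a basic open set $W$ of the prescribed form $(X\aq G)_f$, compatibly with $\rho$, and then controlling the extra components of $\pi^{-1}(x)$ supported on $X\setminus X_0$. Both points rest on the two dimension counts $\dim(X\aq G)=\dim X-r$ and $\dim\bigl(\pi|_{X\setminus X_0}\bigr)^{-1}(x)\le r-1$, so the real care is in invoking generic equidimensionality and the fibre-dimension theorem over a single open set of the required form.
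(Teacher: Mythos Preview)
Your proposal is correct and follows essentially the same route as the paper's own proof: both directions are obtained by comparing the affinized quotient $\pi$ with a Rosenlicht geometric quotient $\rho\colon X_0\to X_0/G$ through the induced map $\varphi\colon X_0/G\to X\aq G$, using birationality of $\varphi$ under (1) together with a fibre-dimension count on $X\setminus X_0$ for the forward direction, and injectivity of $\varphi$ (from the ``one maximal orbit per orbit closure'' observation) for the converse. The only cosmetic difference is that the paper first shrinks $X_0$ so that $\varphi$ becomes an open immersion and then isolates the bad locus $A\subset W$ of reducible fibres, whereas you keep $X_0$ fixed and shrink $W$ directly; the underlying dimension arguments are identical.
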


\begin{proof}
In order to prove the implication (1) $\Longrightarrow$ (2), let
$X_0\subset X$ be as in Rosenlicht's
Theorem \ref{thm:rosenlicht}, and $\rho:X_0\to X_0/G$ the geometric
quotient.
Then  we have the following commutative diagram
\begin{equation}
\label{eqn:thediag}
\xymatrix{
X_0\ar@{^(->}[r]\ar@{->>}[d]_-{\rho}&X\ar@{->}[d]^-{\pi}\\
X_0/G\ar@{->}[r]_-{\varphi}& X\aq G
}
\end{equation}
Since
\[
\begin{split}
\Bbbk(X_0/G)\cong \Bbbk(X_0)^G=&\   \Bbbk(X)^G=\bigl[\Bbbk[X]\bigr]^G=
\bigl[\Bbbk[X]^G\bigr]=\\
& \
  \bigl[\Bbbk[X\aq G]\bigr] =\Bbbk(X\aq G),
\end{split}
\]
it follows that the
geometric quotient $X_0/G$ and the affinized quotient
$X\aq G$ are birationally equivalent via $\varphi$. Hence, there exists
an open subset $V\subset X_0/G$ such that
$\varphi|_{_V} : V\rightarrow X\aq G$ is an open immersion.  Replacing
$X_0$ by
$U=\pi^{-1}(V)$ we can assume that $\varphi$ is an open immersion, and
hence $\rho=\pi|_{X_0}$.

Let $V\subset X\aq G$ be an open subset such that, for each $w\in V$, the fiber
$\pi^{-1}(w)$ has all irreducible components of maximal
dimension  (as follows  from Chevalley's Theorem,
see for example
\cite[Thm.~1.5.4]{fer-ritt}).
Let $m=\dim X-\dim X\aq G$ be the  dimension of such a fiber.
 It follows from diagram \eqref{eqn:thediag} that $m$ is
the maximal dimension of the orbits of $G$ on $X$. Moreover,   if
$w\in W=(X_0/G)\cap V\subset X\aq G$, then
there
exists an unique irreducible component $Z\subset \pi^{-1}(w)$ such
that $Z=\overline{\cO(x)}$, with $x\in X_0$.   Let $Y\subset
  \pi^{-1}(w)$ another irreducible
  component; then $Y\subset X\setminus X_0$.
Let $A\subset W$ be the set of point which fiber is reducible,
$A=\bigl\{ w\in W\mathrel{:} \pi^{-1}(w)\neq \overline{\pi^{-1}(w)\cap
  X_0}\bigr\}$.
Since  $A\subset \pi(X\setminus X_0)$, it follows that if $A$ is dense in
$X_0$, then $X\setminus X_0$ dominates $X\aq G$. Hence the maximal dimension
of a fiber of $\pi|_{_{X\setminus X_0}}$ is $\dim (X\setminus X_0)-\dim X\aq G<
\dim X-\dim X\aq G=m$. But for $a\in A$, $\dim \bigl(\pi|_{_{X\setminus
  X_0}}^{-1}(a)\bigr)=m$, which is a contradiction. Hence, for all  $w\in
W\setminus
\overline {A}\neq \emptyset$, we have that
$\pi^{-1}(w)=\overline{\cO(x)}$, with $x\in X_0$.

Let now $f\in \Bbbk[X]^G$ such that  $(X\aq G)_f\subset W\setminus
\overline{A}$. Then $X_f$
satisfies assertion (2).

Assume now that (2) holds, with $W=(X\aq G)_f$, $f\in
\Bbbk[X]^G$. Then $\pi^{-1}(W)=X_f$,  $\bigl[ \Bbbk[X]^G\bigr]=\bigl[
\Bbbk[X]_f^G\bigr]= \bigl[ \Bbbk[X_f]^G\bigr]$ and $\bigl[
\Bbbk[X]\bigr]^G= \bigl[ \Bbbk[X]_f\bigr]^G=\bigl[
\Bbbk[X_f]\bigr]^G$. Thus, we can assume without loss of generality
that $X$ is such that $\pi^{-1}(y)=\overline{G\cdot x}$, with $x\in
X_{\max}$, for all $y\in X\aq G$. Let $X_0\subset X$ be as in
Rosenlicht's Theorem and consider the commutative diagram
\begin{center}
\mbox{\xymatrix{
X_0\ar@{^(->}[r]\ar@{->>}[d]_-{\rho}&X\ar@{->}[d]^-{\pi}\\
X_0/G\ar@{->}[r]_-{\varphi}& X\aq G
}}
\end{center}

Let $y,z\in X_0/G$ be such that  $\varphi(y)=\varphi(z)$. Then
$\rho^{-1}(y)$ and $\rho^{-1}(z)$ are contained in
$\pi^{-1}\bigl(\varphi(y)\bigr)$, and hence
$\rho^{-1}(y)=\rho^{-1}(z)$. It follows that $\varphi$ is
injective. Since $\varphi$ is dominant, it follows that  $\varphi$
induces a finite,
purely inseparable  extension $\bigl[\Bbbk[X]^G\bigr]\subset
\bigl[\Bbbk[X]\bigr]^G$.
\end{proof}

\begin{remark}
(1) Observe that if $G$ is reductive, then $\Bbbk[X]^G$ is finitely
generated, and thus Theorem \ref{thm:niceopensubset} applies.

(2) Let $G\times X\to X$ an action satisfying condition (1) of Theorem
\ref{thm:niceopensubset}, and $\mathcal O, \mathcal O'\subset
X_{\max}\cap X_f$, with $f$ as in condition (2) of that theorem. Then
there exists $h\in \Bbbk[X_f\aq G]=\Bbbk[X]_f^G$ such that
$f\bigl(\pi(\mathcal O)\bigr)   =0$ and $f\bigl(\pi(\mathcal
O')\bigr)=1$. Hence $\Bbbk[X]_f^G$, or equivalently the affinized
quotient $\pi: X_f\to X_f\aq G$, separates orbits in $X_{\max}\cap
X_f$.
\end{remark}

\begin{corollary}
\label{coro:affinizedsep}
Let $G$ be an algebraic group acting on an affine algebraic variety, in
such a way that $ \bigl[\Bbbk[X]^G\bigr]= \bigl[\Bbbk[X]\bigr]^G$. Then
there exists $f\in \Bbbk[X]^G$ such that $\Bbbk[X]_f^G$ separates the
orbits in $X_{\max}\cap X_f$.
\end{corollary}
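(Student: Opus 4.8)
The plan is to derive Corollary~\ref{coro:affinizedsep} as an immediate consequence of Theorem~\ref{thm:niceopensubset} together with part~(2) of the Remark that precedes the corollary. The hypothesis $\bigl[\Bbbk[X]^G\bigr]=\bigl[\Bbbk[X]\bigr]^G$ is exactly condition~(1) of Theorem~\ref{thm:niceopensubset}, so the heavy geometric lifting has already been done there; the corollary is essentially a restatement of what the remark extracts from that theorem. The only point requiring attention is that Theorem~\ref{thm:niceopensubset} is stated under the standing assumption that $\Bbbk[X]^G$ is finitely generated, whereas the corollary allows an arbitrary algebraic group $G$ acting on an affine variety, where $\Bbbk[X]^G$ need not be finitely generated. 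So the main obstacle is bridging this finiteness gap, and the tool for doing so is Grosshans' Proposition~\ref{prop:grosshanloc}.

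First I would invoke Proposition~\ref{prop:grosshanloc} to produce a nonzero $f_0\in\Bbbk[X]^G$ such that $\bigl(\Bbbk[X]_{f_0}\bigr)^G\cong\bigl(\Bbbk[X]^G\bigr)_{f_0}$ is finitely generated. Replacing $X$ by $X_{f_0}$, I would check that the hypothesis on invariant rational functions is preserved: since localization does not change the fraction field, $\bigl[\Bbbk[X_{f_0}]^G\bigr]=\bigl[\bigl(\Bbbk[X]^G\bigr)_{f_0}\bigr]=\bigl[\Bbbk[X]^G\bigr]$ and likewise $\bigl[\Bbbk[X_{f_0}]\bigr]^G=\bigl[\Bbbk[X]\bigr]^G$, so condition~(1) continues to hold on $X_{f_0}$. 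Now $X_{f_0}$ is an affine $G$-variety with $\Bbbk[X_{f_0}]^G$ finitely generated and satisfying hypothesis~(1), so Theorem~\ref{thm:niceopensubset} applies to it.

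Next I would apply Theorem~\ref{thm:niceopensubset} to $X_{f_0}$ to obtain a nonzero invariant $f_1\in\Bbbk[X_{f_0}]^G$ such that, writing $f=f_0^N f_1$ for a suitable power so that $f\in\Bbbk[X]^G$ and $X_f=(X_{f_0})_{f_1}$, the affinized quotient $\pi\colon X_f\to X_f\aq G$ has the property that each fiber is the closure of a single orbit of maximal dimension. Then part~(2) of the Remark immediately preceding the corollary shows that for any two distinct orbits $\mathcal O,\mathcal O'\subset X_{\max}\cap X_f$ their images under $\pi$ are distinct points of $X_f\aq G$ separated by some $h\in\Bbbk[X]_f^G=\Bbbk[X_f\aq G]$; that is, $\Bbbk[X]_f^G$ separates the orbits in $X_{\max}\cap X_f$. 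This is precisely the assertion of the corollary, completing the argument. The only genuinely delicate bookkeeping is to confirm that the localized invariant ring computations commute with passing to fraction fields, which is routine given the remark $\bigl(\Bbbk[X]^G\bigr)_f\cong\bigl(\Bbbk[X]_f\bigr)^G$ recorded earlier in the Preliminaries.
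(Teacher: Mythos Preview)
Your proposal is correct and follows essentially the same route as the paper's proof: first localize via Grosshans' Proposition~\ref{prop:grosshanloc} to reduce to the finitely generated case, verify that the hypothesis $\bigl[\Bbbk[X]^G\bigr]=\bigl[\Bbbk[X]\bigr]^G$ survives localization, and then apply Theorem~\ref{thm:niceopensubset} (together with the preceding Remark) on the localized variety. Your handling of the bookkeeping for producing $f\in\Bbbk[X]^G$ with $X_f=(X_{f_0})_{f_1}$ is in fact slightly more explicit than the paper's.
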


\begin{proof}
By Proposition \ref{prop:grosshanloc}, there exists  $h\in \Bbbk[X]^G$
such that $\Bbbk[X_h]=\Bbbk[X]_h$ is finitely generated. Since
\[
\bigl[\Bbbk[X_h]^G\bigr]=\bigl[\Bbbk[X]^G\bigr]=\bigl[\Bbbk[X_h]\bigr]^G
\]
it follows
from Theorem \ref{thm:niceopensubset} that there exists $g\in
\Bbbk[X_h]^G=\Bbbk[X]_{h}^G$ such that
$\Bbbk\bigl[(X_h)_g\bigr]^G=\Bbbk[X]_{gh}^G$ separates the orbits in
$X_{\max}\cap X_{hg}$.
\end{proof}

We now characterize observability in terms of the underlying geometry.

\begin{theorem}
\label{thm:obsercharac}
Let $G$ be an affine group acting on an affine variety $X$. Then the
action is observable if and only if
(1) $\bigl[\Bbbk[X]^G\bigr]=\bigl[\Bbbk[X]\bigr]^G$ and (2) $\Omega(X)$
contains a nonempty open subset.
\end{theorem}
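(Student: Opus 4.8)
The plan is to prove the two implications separately. In both I will first reduce to the case where $X$ is irreducible and $G$ is connected: for a connected group Lemma \ref{lem:obsred} handles the irreducible components, and the passage between $G$ and its identity component $G^0$ is done by a norm argument over the finite group $G/G^0$ (products and intersections over a transversal), which is legitimate because $\Bbbk[X]$ is a domain, so that norms of nonzero elements are nonzero; one checks that conditions (1) and (2) are unchanged under these reductions. Throughout I will use three standing facts: that invariants commute with localization at an invariant, $\bigl(\Bbbk[X]_f\bigr)^G\cong\bigl(\Bbbk[X]^G\bigr)_f$; that a nonzero invariant is constant along each orbit closure; and Grosshans' Proposition \ref{prop:grosshanloc}, which lets me pass to a localization where $\Bbbk[X]^G$ is finitely generated.

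For the forward implication, condition (1) is the easy half. Given $\varphi\in\bigl[\Bbbk[X]\bigr]^G$, the ideal of denominators $I=\{h\in\Bbbk[X]\mathrel{:} h\varphi\in\Bbbk[X]\}$ is a nonzero $G$-stable ideal because $\varphi$ is $G$-invariant; observability yields $0\neq h\in I^G$, and then $h\varphi\in\Bbbk[X]^G$ as well, so $\varphi=(h\varphi)/h\in\bigl[\Bbbk[X]^G\bigr]$. For condition (2) note that $X_{\max}$ is open and dense, so only the closedness of generic orbits is at stake. Combining Grosshans' lemma with Theorem \ref{thm:niceopensubset} (applied after passing to a localization where $\Bbbk[X]^G$ is finitely generated) and, using observability, discarding the locus of non-maximal orbits, I obtain $f\in\Bbbk[X]^G$ such that $X_f\subseteq X_{\max}$ and the affinized quotient $\pi\colon X_f\to W=(X\aq G)_f$ has every fiber equal to a single orbit $\cO(y_w)$, closed in $X_f$ and of maximal dimension; consequently the $X$-boundary $\overline{\cO(y_w)}\setminus\cO(y_w)$ lies in $X\setminus X_f$.

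The crux is to upgrade ``closed in $X_f$'' to ``closed in $X$'', and this is exactly where observability of the full action, rather than of the localization, is indispensable. Suppose the set $B\subset W$ of those $w$ for which $\cO(y_w)$ is not closed in $X$ were dense. Then $D=\bigcup_{w\in B}\bigl(\overline{\cO(y_w)}\setminus\cO(y_w)\bigr)$ is contained in $X\setminus X_f$, so $\overline{D}\subsetneq X$ is a proper $G$-stable closed subset; observability of the action on $X$ gives $0\neq g\in\mathcal I(\overline{D})^G$. Since $g$ is constant on each orbit closure and vanishes on the boundary, its value on each $\cO(y_w)$ with $w\in B$ is $0$; hence $g$ vanishes on $\pi^{-1}(B)$, which is dense in $X_f$, forcing $g=0$, a contradiction. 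Therefore $B$ is not dense, the nonempty open set $W_1=W\setminus\overline{B}$ consists of good $w$, and $\pi^{-1}(W_1)\subseteq\Omega(X)$ is the required open set. I expect this upgrade to be the main obstacle, precisely because observability is not inherited by closed subvarieties in general.

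For the converse, assume (1) and (2). Arguing as above, but now using the open set of closed maximal orbits supplied by (2) to shrink $W$, I produce $f\in\Bbbk[X]^G$ such that $\pi\colon X_f\to W$ has every fiber a single (closed, maximal) orbit. It then suffices to show the action on $X_f$ is observable: for a nonzero $G$-stable $I\subset\Bbbk[X]$ the ideal $J=I\Bbbk[X]_f$ is nonzero and $G$-stable, and a nonzero invariant of $J$, multiplied by a suitable power of $f$, lands in $I^G$. On $X_f$ every $G$-stable closed subset is a union of orbits, hence of fibers, hence saturated: $\cV(J)=\pi^{-1}\bigl(\pi(\cV(J))\bigr)$. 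If $\pi(\cV(J))$ were dense in $W$ it would contain a nonempty open set whose preimage, a nonempty open subset of the irreducible $X_f$, would lie in the proper closed set $\cV(J)$ — impossible. Hence $\overline{\pi(\cV(J))}\subsetneq W$, and pulling back a nonzero element of $\mathcal I\bigl(\overline{\pi(\cV(J))}\bigr)\subset\Bbbk[W]=\Bbbk[X_f]^G$ produces a nonzero invariant vanishing on $\cV(J)$, a suitable power of which lies in $J^G$. Thus $J^G\neq(0)$, which gives $I^G\neq(0)$ and completes the proof.
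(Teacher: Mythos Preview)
Your proof is correct, and the converse direction together with the argument for condition~(1) in the forward direction match the paper's essentially verbatim (ideal of denominators for~(1); localize until fibers of the affinized quotient are single closed orbits, then pull back an invariant from the base for the converse).

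The one genuine difference is your treatment of condition~(2) in the forward direction, where the paper's argument is much shorter. The paper simply takes $f\in\mathcal I(X\setminus X_{\max})^G\setminus\{0\}$ (which exists by observability) and observes that every orbit $\cO\subset X_f$ is already closed in $X$: the boundary $\overline{\cO}\setminus\cO$ consists of orbits of strictly smaller dimension, hence lies in $X\setminus X_{\max}$, where $f$ vanishes; but $f$ is constant on $\overline{\cO}$ and nonzero on $\cO\subset X_f$, so the boundary is empty. Thus $X_f\subset\Omega(X)$ immediately, with no appeal to Theorem~\ref{thm:niceopensubset} and no density argument. In fact your own setup already contains this: once you have arranged $X_f\subset X_{\max}$ with $f\in\Bbbk[X]^G$, that very $f$ kills the boundary of every orbit in $X_f$, so your ``crux'' paragraph---though correct---is an unnecessary detour. (The paper records this as a remark after the theorem: for \emph{any} invariant $f$ with $X_f\subset X_{\max}$ one has $X_f\subset\Omega(X)$.) What your longer route does buy is an explicit illustration of why observability of the ambient action, not merely of the localization, is the operative hypothesis; the paper's one-line argument uses the same global invariant $f$ but leaves that point implicit.
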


\begin{proof}
Assume that the action is observable and let
 $Y=X\setminus X_{\max}$ and let $ f\in \mathcal I (Y)^G\setminus
\{0\}$. Then the  nonempty affine open
subset $X_f\subset X_{\max}$ is $G$-stable and any point $x\in X_f$ is
such that $\dim \mathcal O(x)$ is maximal. Let $\cO\subset X_f$ be an
orbit. Since every
orbit is open in its closure, it follows that if
$\overline{\cO}\neq \cO$ then $\overline{\cO}\cap Y\neq
\emptyset$. Since $f$ is constant on orbit closures, it follows that
$f|_{_{\cO}}=0$ and hence we obtain a contradiction.
It
follows that $\emptyset\neq X_f\subset \Omega(X)$.

Clearly $\bigl[\Bbbk[X]^G\bigr]\subset \bigl[\Bbbk[X]\bigr]^G $. Let $g\in
\bigl[\Bbbk[X]\bigr]^G$, and consider the ideal $I=\bigl\{ f\in
\Bbbk[X] \mathrel{:} fg\in \Bbbk[X]\bigr\}$. Clearly $I$ is $G$-invariant,
and hence there exists $ f\in \Bbbk[X]^G$ such that $fg\in\Bbbk[X]^G$.

In order to prove the converse, let $f\in \Bbbk[X]^G$ such that
$\Bbbk[X_f]^G$ is finitely generated (see Proposition
\ref{prop:grosshanloc}). Then, by Proposition \ref{prop:obserchar},
the action of $G$ on $X$ is observable
if and only if the action on $X_f$ is so. Thus, we can assume without
loss of generality that  $\Bbbk[X]^G$ is finitely generated. Let  $\pi:X\to
X\aq  G$ be the affinized quotient. By
Theorem \ref{thm:niceopensubset}, there exists
$f\in \Bbbk[X]^G$ such that $\pi^{-1}(y)=\overline{G\cdot x}$ for all
$y\in V=(X\aq G)_f\cong X_f\aq  G$. Moreover, we have the
following commutative diagram
\begin{center}
\mbox{
\xymatrix{
(X_f)_0\, \ar@{^(->}[r]\ar@{->>}[d]_-{\rho}&
X_f\ar@{->}[d]^-{\pi}\ar@{^(->}[r]&
X\ar@{->}[d]^-{\pi}\\
(X_f)_0/G\, \ar@{^(->}[r]_-\varphi& X_f\aq G\ar@{^(->}[r]&
X\aq G
}
}
\end{center}

\noindent where $\rho$ is a geometric quotient. Since
$\Bbbk\bigl((X_f)_0/G\bigr)= \Bbbk\bigl((X_f)_0\bigr)^G  =
\Bbbk(X_f)^G$, it follows
by hypothesis that $\Bbbk\bigl((X_f)_0/G\bigr)
=\Bbbk(X_f\aq G)$. Since $\rho$
and $\pi$ separate closed orbits, it follows that $\varphi$ is an open
immersion.

Since $\Omega(X)$ contains a nonempty open subset, it follows that
$\Omega(X)\cap (X_f)_0\neq
\emptyset$. Let $g\in \Bbbk[X]^G$ such that $(X_f\aq G)_g\subset
(X_f)_0/G$. If $y\in X_f\aq G)_g$, then
$\pi^{-1}(y)=\overline{\mathcal O(x)}$, where $x\in \Omega(X)\cap (X_f)_0$,
hence $\pi^{-1}(y)$ is a closed
orbit of maximal dimension. Therefore, $\pi|_{_{X_{fg}}}: X_{fg}\to
(X_f\aq G)_g\cong X_{fg}\aq G$ is such that all its fibers are
closed orbits. Replacing $X$ by $X_{fg}$, we can hence assume that all
the fibers of the
affinized quotient are closed orbits. In this situation, it is clear
that any nonzero $G$-stable ideal $I\subset \Bbbk[X]$ is such that
$I ^G\neq (0)$. Indeed, if $Y\subset X$ is a $G$-stable closed
subset such that dominates $X\aq G$, then $\pi(Y)$ contains an
open subset of $X\aq G$ and hence $Y=\pi^{-1}\pi(Y)$ contains an
open subset of $X$; that is $Y=X$. It follows that if $Y\subsetneq X$ is
a proper $G$-stable closed subset, then there  exists $z\in
(X/_{\aff}G)\setminus \overline{\pi(Y)}$. Let $f\in \Bbbk\bigl[
X\aq G\bigr]=\Bbbk[X]^G$ such that $f(z)=1$ and
$f\bigl(\overline{\pi(Y)}\bigr)=0$. Then $f\in \mathcal
I(Y)^G\setminus \{0\}$.
\end{proof}

The following two examples show that, in Theorem \ref{thm:obsercharac},
both conditions
\begin{enumerate}
\item $\bigl[\Bbbk[X]^G\bigr]=\bigl[\Bbbk[X]^G\bigr]$, and
\item $\Omega(X)$ contains a nonempty open subset
\end{enumerate}
are necessary.

\begin{example}
(1) Consider the action of $\operatorname{GL}_n(\Bbbk)$ on
$\operatorname{M}_{n}(\Bbbk)$ by left translations. Then
 $\bigl[\Bbbk[\operatorname{M}_{n}(\Bbbk)]\bigr]^{\operatorname{GL}_n(\Bbbk)}
 =
 \bigl[\Bbbk[\operatorname{M}_{n}(\Bbbk)]^{\operatorname{GL}_n(\Bbbk)}\bigr]$
 while we
have that $\Omega\bigr(\operatorname{M}_{n}(\Bbbk)\bigr)=\emptyset$.

\medskip

\noindent (2) Let $G$ be a semisimple group,  $B\subset G$ a Borel
subgroup, and consider the $B$-action on $G$ by left
translations. Then $ \Omega(G)=G$ while
$\bigl[\Bbbk[G]^B\bigr]=\Bbbk\subsetneq \bigl[\Bbbk[G]\bigr]^B$.
\end{example}

\begin{remark}
Observe that it follows from the proof of  Theorem \ref{thm:obsercharac},
that for any $f\in \Bbbk[X]^G$ such that $X_f\subset X_{\max}$, then
$X_f\subset\Omega(X)$.
\end{remark}

With further assumptions, on either the algebraic group $G$ or the
affine variety $X$, there is a simplified characterization of the 
observability of the action $G\times X\to X$.

\begin{proposition}
\label{prop:partialconv}
Let $G=L\times U$ be a direct product of a reductive group with a
unipotent  group, and let $X$ be a affine
$G$-variety such that $\Omega(X)$ contains a nonempty open
subset. Then  $\bigl[\Bbbk[X]^G\bigr]=
\bigl[\Bbbk[X]\bigr]^G$; that is, the action $G\times X\to X$ is
observable.
\end{proposition}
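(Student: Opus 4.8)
The plan is to prove directly that the action is observable; the field equality \(\bigl[\Bbbk[X]^G\bigr]=\bigl[\Bbbk[X]\bigr]^G\) then follows from Theorem \ref{thm:obsercharac}, since the hypothesis on \(\Omega(X)\) is exactly condition (2) of that theorem. Write \(A=\Bbbk[X]^U\); since \(L\) and \(U\) commute, \(L\) acts on \(A\) and \(A^L=\Bbbk[X]^G\). The organizing step is to reduce observability of the \(G\)-action to observability of the \(L\)-action on \(A\). Let \(I\subseteq\Bbbk[X]\) be a nonzero \(G\)-stable ideal, which we may assume proper. Because \(U\) is unipotent, the nonzero \(U\)-module \(I\) has \(I^U\neq(0)\); set \(J=I^U=I\cap A\). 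One checks that \(J\) is a nonzero, proper, \(L\)-stable ideal of \(A\) and that \(J^L=I\cap A^L=I^G\). Hence it suffices to show that every nonzero proper \(L\)-stable ideal of \(A\) meets \(A^L\) nontrivially, i.e.\ that the \(L\)-action on \(\Spec A\) is observable.

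Since \(L\) is reductive, I would next arrange finite generation. By Grosshans' Proposition \ref{prop:grosshanloc} applied to the \(U\)-action there is \(0\neq f\in A\) with \(A_f=(\Bbbk[X]_f)^U\) finitely generated; using that \(L\) is reductive one replaces \(f\) by an \(L\)-invariant multiple (so that \(X_f\) is \(G\)-stable) and, invoking Proposition \ref{prop:obserchar}, reduces to the case where \(A\) itself is finitely generated. Thus \(W=\Spec A\) is an affine \(L\)-variety and \(p:X\to W=X\aq U\) is the affinized \(U\)-quotient, a dominant \(G\)-equivariant morphism on which \(U\) acts trivially through \(L=G/U\).

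The heart of the argument --- and the step I expect to be the main obstacle --- is to descend the hypothesis on closed \(G\)-orbits to \(W\), i.e.\ to show that \(\Omega(W)\) for the \(L\)-action contains a nonempty open subset. Here I would use two facts. First, by the Kostant--Rosenlicht theorem every \(U\)-orbit on \(X\) is closed, and by Theorem \ref{popvin.thm}(a) (applied to the solvable, character-free group \(U\)) one has \(\Bbbk(X)^U=\bigl[\Bbbk[X]^U\bigr]=\Bbbk(W)\); consequently \(p\) is, on a \(G\)-stable dense open subset, a geometric \(U\)-quotient, with fibres the \(U\)-orbits. Second, over this locus a closed \(G\)-orbit \(\cO(x)\) is \(U\)-saturated and closed, so its image \(p\bigl(\cO(x)\bigr)=\cO\bigl(p(x)\bigr)\) is a single \(L\)-orbit; the saturation correspondence for a geometric quotient shows this image is closed, and a dimension count identifies it as an \(L\)-orbit of maximal dimension. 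Carrying this closedness across to the affine model \(W\) (rather than to the possibly non-affine geometric quotient) is the delicate point, and is where I would spend most of the effort. The outcome is a nonempty open subset of \(W\) consisting of closed \(L\)-orbits of maximal dimension.

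Finally I would invoke the reductive case. For \(L\) reductive acting on the affine variety \(W\) with \(\Omega(W)\) dense, let \(J\subsetneq A\) be a nonzero \(L\)-stable ideal and \(\pi_L:W\to W\cq L\) the quotient. Since \(\mathcal{V}(J)\subsetneq W\) is a proper closed \(L\)-stable subset while the closed \(L\)-orbits fill a dense open set, some closed orbit \(\cO\) is disjoint from \(\mathcal{V}(J)\); as \(\pi_L\) is surjective, carries closed \(L\)-stable sets to closed sets, and has a unique closed orbit in each fibre, the point \(\pi_L(\cO)\) lies outside the closed set \(\pi_L\bigl(\mathcal{V}(J)\bigr)\). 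A function in \(\Bbbk[W]^L\) separating these yields, after raising to a power, a nonzero element of \(J^L\). Hence \(J^L\neq(0)\); applying this to \(J=I^U\) gives \(I^G=J^L\neq(0)\), so the \(G\)-action is observable, and Theorem \ref{thm:obsercharac} completes the proof.
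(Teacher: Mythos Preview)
Your strategy---first pass to $U$-invariants, then analyze the $L$-action on $W=\Spec\Bbbk[X]^U$---can probably be pushed through, but it is the hard way around, and you correctly flag the two places where real work remains: arranging finite generation of $\Bbbk[X]^U$ after localizing at an element of $\Bbbk[X]^G$ (``replace $f$ by an $L$-invariant multiple'' is not an obvious step), and descending closedness of $G$-orbits from $X$ to the affine model $W$ rather than to a merely generic geometric quotient. Neither obstacle is insurmountable, but neither is trivial.

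The paper avoids both by reversing the order of invariants: take $L$-invariants first, then $U$-invariants. Given a nonzero $G$-ideal $I\subsetneq\Bbbk[X]$, the hypothesis supplies a closed $G$-orbit $Z$ disjoint from $\mathcal V(I)$. Since $Z$ is in particular an $L$-stable closed subset of $X$ and $L$ is reductive, the standard separation property for reductive groups (applied directly in $X$) yields $f\in I^L$ with $f|_Z=1$; hence $I^L\neq(0)$. Because $U$ commutes with $L$, $I^L=I\cap\Bbbk[X]^L$ is a nonzero $U$-submodule, so $I^G=(I^L)^U\neq(0)$ by unipotence. That is the entire proof.

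The point is that the closed orbits you need are already sitting inside the affine variety $X$, where reductivity of $L$ applies immediately; there is no need to manufacture a new affine $L$-variety $W$ and transport the hypothesis there. Your route builds a quotient by $U$ and then proves observability for $L$ on that quotient; the paper's route uses reductivity of $L$ on $X$ itself and then invokes the automatic observability of $U$. The latter order short-circuits all the finite-generation and descent issues.
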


\begin{proof}
Let $I\subsetneq \Bbbk[X]$ be a nonzero $G$-stable ideal. Then
$\mathcal V(I) \neq \emptyset $ is a proper closed subset. Since there
exists a open subset of  closed orbits, then there exists a closed orbit
$Z$ such that $Z\cap \mathcal V(I)=\emptyset$. Since $L$ is reductive
and that $Z$ is $L$-stable,
it follows that there exists $f\in \Bbbk[X]^L$ such that $f\in I$,
$f|_{_{Z}}=1$. Hence, $I^L\neq (0)$. Since $U$ normalizes $L$,
it follows that $I\cap \Bbbk[X]^L\neq (0)$ is an $U$-submodule, and hence
$I^G=(I^L)^U\neq (0)$.
\end{proof}

\begin{definition}
Let $\mu : G\times X\to X$ be an action of $G$ on the  affine
variety $X$. We denote
\[
E_G(X) = \bigl\{\chi\in\mathcal{X}(G)\mathrel{:}\text{there is a
  nonzero $\chi$-semiinvariant }
       f\in\Bbbk[X]\bigr\}.
\]
\end{definition}

\begin{corollary}
\label{fact.cor}
Let $\mu : G\times X\to X$ be an action of $G$ on the factorial affine
variety $X$.
Then the following are equivalent.
\begin{enumerate}
\item $\mu$ is observable.
\item \begin{enumerate}
       \item $\Omega(X)$ contains a dense, open subset of closed $G$-orbits.
       \item $E_G(X) $ is a group.
      \end{enumerate}
\end{enumerate}
\end{corollary}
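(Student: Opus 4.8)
The plan is to reduce the corollary to Theorem~\ref{thm:obsercharac} and to translate its two conditions into (a) and (b), using that $X$ is factorial. First I record two preliminary reductions. Since $\Bbbk[X]$ is factorial it is a domain, so $X$ is irreducible; hence every nonempty open subset of $X$ is dense, and since by definition every point of $\Omega(X)$ lies on a closed orbit, condition (2) of Theorem~\ref{thm:obsercharac} (``$\Omega(X)$ contains a nonempty open subset'') is equivalent to condition (a): given a nonempty open $U\subseteq\Omega(X)$ one replaces it by the $G$-stable open set $GU\subseteq\Omega(X)$, which is a dense union of closed orbits. Secondly, $E_G(X)$ is always a submonoid of $\mathcal X(G)$: the constant $1$ is a semiinvariant of trivial weight, and since $\Bbbk[X]$ is a domain the product of a nonzero $\chi$-semiinvariant and a nonzero $\psi$-semiinvariant is a nonzero $\chi\psi$-semiinvariant. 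Thus condition (b), that $E_G(X)$ be a group, means exactly that $E_G(X)$ is closed under inverses.

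For the forward implication, observable $\Rightarrow$ (a) and (b). By Theorem~\ref{thm:obsercharac} observability gives condition (2) there, which together with irreducibility is precisely (a). To obtain (b), let $\chi\in E_G(X)$ be witnessed by a nonzero $\chi$-semiinvariant $h$. The principal ideal $(h)$ is $G$-stable, because $g\cdot h=\chi(g)\,h\in(h)$; by observability $(h)^G\neq(0)$, so there is a nonzero invariant $\varphi=hs$ with $s\in\Bbbk[X]$. Comparing $g\cdot\varphi=\varphi$ with $g\cdot\varphi=\chi(g)\,h\,(g\cdot s)$ and cancelling the nonzero $h$ in the domain $\Bbbk[X]$ yields $g\cdot s=\chi(g)^{-1}s$; thus $s$ is a nonzero $\chi^{-1}$-semiinvariant and $\chi^{-1}\in E_G(X)$. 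Hence $E_G(X)$ is a group. I note that this direction uses factoriality only through integrality.

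For the converse, (a) and (b) $\Rightarrow$ observable. By the first reduction (a) gives condition (2) of Theorem~\ref{thm:obsercharac}, so it remains to deduce its condition (1), that $\bigl[\Bbbk[X]^G\bigr]=\bigl[\Bbbk[X]\bigr]^G$. The inclusion $\subseteq$ is automatic. For the reverse, take $f\in\bigl[\Bbbk[X]\bigr]^G$; here I invoke Theorem~\ref{popvin.thm} in its factorial case~(b) to write $f=g/h$ with $g,h$ both $\chi$-semiinvariant for a common weight $\chi$. Since $h\neq0$ we have $\chi\in E_G(X)$, and by hypothesis (b) there is a nonzero $\chi^{-1}$-semiinvariant $p$. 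Then $gp$ and $hp$ are $G$-invariant and $f=g/h=(gp)/(hp)\in\bigl[\Bbbk[X]^G\bigr]$. This establishes condition~(1), and Theorem~\ref{thm:obsercharac} then yields observability.

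The only place where the factoriality hypothesis is genuinely used is this converse: Popov--Vinberg's theorem is exactly what permits expressing an arbitrary invariant rational function as a quotient of two semiinvariants sharing a single weight $\chi$, and this is the precise bridge needed to convert the group property of $E_G(X)$ into the field equality $\bigl[\Bbbk[X]^G\bigr]=\bigl[\Bbbk[X]\bigr]^G$. I expect the main (though modest) obstacle to be conceptual rather than computational: recognizing that observability should be tested against the $G$-stable principal ideal generated by a semiinvariant in the forward direction, and that the common-weight form of Popov--Vinberg is what one must feed the group hypothesis in the converse. No delicate estimates are anticipated; the work is in correctly assembling the cited results.
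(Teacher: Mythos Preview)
Your proof is correct. The forward direction matches the paper's argument exactly: both test observability against the $G$-stable principal ideal $(h)$ generated by a $\chi$-semiinvariant to produce a $\chi^{-1}$-semiinvariant.

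For the converse, your route differs from the paper's. You verify condition~(1) of Theorem~\ref{thm:obsercharac} directly: given $f\in[\Bbbk[X]]^G$, write $f=g/h$ as a quotient of $\chi$-semiinvariants via Theorem~\ref{popvin.thm}(b), multiply numerator and denominator by a $\chi^{-1}$-semiinvariant supplied by~(b), and conclude $f\in[\Bbbk[X]^G]$. The paper instead works at the level of ideals: it introduces the normal subgroup $G_{\uni}$ generated by unipotent elements, observes that $G_{\uni}$ has no nontrivial characters so Theorem~\ref{popvin.thm} and Theorem~\ref{thm:obsercharac} force the $G_{\uni}$-action to be observable (using that closed $G$-orbits decompose into closed $G_{\uni}$-orbits), obtains $I^{G_{\uni}}\neq(0)$, and then handles the residual torus $G/G_{\uni}$ action via semiinvariants and hypothesis~(b). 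Your argument is shorter and avoids the passage through $G_{\uni}$; the paper's argument has the advantage that the same $G_{\uni}$ decomposition is reused verbatim for Corollary~\ref{solve.cor}, where factoriality is replaced by solvability of $G$.
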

\begin{proof}
Assume that (1) holds. Then by Theorem~\ref{thm:obsercharac},
$\Omega(X)$ contains a dense, open subset of closed orbits. Let $f$ be
a nonzero semiinvariant
for $\chi\in \mathcal X(G)$. If $\chi\in E_G(X)$ is not a unit in
this monoid then the nonzero $G$-ideal
$f\Bbbk[X]\subseteq\Bbbk[X]$ has no nonzero $G$-invariant, and thus $\mu$ is not observable.

Conversely, assume that (2) holds, and let $I$ be a nonzero $G$-ideal of $\Bbbk[X]$. Let $G_{\uni}$
be the closed normal subgroup of $G$ generated by its unipotent
elements.
Since $G_{\uni}$ is normal in $G$, any closed $G$-orbit of $X$ consists
of closed $G_{\uni}$-orbits.
Thus, by Theorem~\ref{popvin.thm} and Theorem~\ref{thm:obsercharac},
$J=I^{G_{\uni}}$ is a nonzero ideal of $\Bbbk[X]^{G_{\uni}}$.  Now
$G_{\uni}$ acts trivially on $J$, so that $G$ acts on $J$ through the
torus $S=G/G_{\uni}$. Let $f\in J$ be a nonzero $\chi$-semiinvariant
for this action. Since $E_G(X)$
is a group there is a nonzero $\chi^{-1}$-semiinvariant $g\in
\Bbbk[X]$. Thus $fg\in I\cap \Bbbk[X]^{G}=I^G$ is the
desired invariant.
\end{proof}

\begin{corollary} \label{solve.cor}
Let $\mu : G\times X\to X$ be an action of $G$ on the affine variety $X$
and assume that $G$ is solvable. Then the following are equivalent.
\begin{enumerate}
\item $\mu$ is observable.
\item $E_G(X) $ is a group.
\end{enumerate}
\end{corollary}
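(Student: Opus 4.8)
The plan is to follow the template of the proof of Corollary~\ref{fact.cor}, replacing the factoriality hypothesis by solvability. The essential simplification is structural: when $G$ is connected solvable, the closed normal subgroup $G_{\uni}$ generated by its unipotent elements is exactly the unipotent radical, hence is itself unipotent, and $G/G_{\uni}$ is a torus. This is what makes the extra hypothesis ``$\Omega(X)$ contains a dense open set of closed orbits'' of Corollary~\ref{fact.cor} superfluous here. Throughout I would assume $G$ connected and $X$ irreducible, so that $\Bbbk[X]$ is a domain and nonzero semiinvariants cannot multiply to $0$; the reducible case reduces to this via Lemma~\ref{lem:obsred}, each irreducible component being $G$-stable since $G$ is connected. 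In particular $E_G(X)$ is then a submonoid of $\mathcal X(G)$, and asserting that it is a group amounts to closure under $\chi\mapsto\chi^{-1}$.

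For the implication (1) $\Rightarrow$ (2) I would argue exactly as in Corollary~\ref{fact.cor}, noting that solvability plays no role. Let $\chi\in E_G(X)$ with a nonzero $\chi$-semiinvariant $f$. The principal ideal $f\Bbbk[X]$ is a nonzero $G$-stable ideal, so observability yields a nonzero invariant $fh\in\bigl(f\Bbbk[X]\bigr)^G$. Since $f$ is not a zero divisor, invariance of $fh$ forces $g\cdot h=\chi(g)^{-1}h$, i.e.\ $h$ is a nonzero $\chi^{-1}$-semiinvariant. Hence $\chi^{-1}\in E_G(X)$, and $E_G(X)$ is a group.

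The real content is (2) $\Rightarrow$ (1), where solvability enters decisively. Let $I\subset\Bbbk[X]$ be a nonzero $G$-stable ideal; I must produce a nonzero element of $I^G$. Writing $U=G_{\uni}$, which is unipotent, I use that every action of a unipotent group on an affine variety is observable; since $I$ is $U$-stable this gives $J:=I^U\neq(0)$. Moreover $J$ is a $G$-stable ideal of $\Bbbk[X]^U$ on which $U$ acts trivially, so $G$ acts on $J$ through the torus $T=G/U$. Decomposing $J$ into $T$-weight spaces I select a nonzero $\chi$-semiinvariant $f\in J$, where $\chi\in\mathcal X(T)=\mathcal X(G)$ (every character of $G$ kills the unipotent $U$). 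Then $\chi\in E_G(X)$, and since $E_G(X)$ is a group there is a nonzero $\chi^{-1}$-semiinvariant $g\in\Bbbk[X]$; the product $fg$ is $G$-invariant, lies in $I$, and is nonzero because $\Bbbk[X]$ is a domain. Thus $I^G\neq(0)$ and the action is observable.

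The main obstacle I anticipate is the bookkeeping around $G_{\uni}$: confirming that for solvable $G$ it is genuinely unipotent with torus quotient, so that the observability of unipotent actions lets me bypass the orbit-closure condition of Corollary~\ref{fact.cor}, and checking the reductions to connected $G$ and irreducible $X$ so that no zero-divisor pathology spoils the products of semiinvariants (for disconnected solvable $G$ the quotient $G/G_{\uni}$ need not be diagonalizable, so the weight decomposition of $J$ can fail and that case requires separate care). Once these structural points are settled, both implications are short.
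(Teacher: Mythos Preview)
Your proposal is correct and follows essentially the same approach as the paper: the paper's own proof is the one-line remark that the argument of Corollary~\ref{fact.cor} goes through once one observes that, for solvable $G$, the subgroup $G_{\uni}$ is unipotent and hence always acts observably, so $I^{G_{\uni}}\neq(0)$ without any hypothesis on $\Omega(X)$. You have unpacked exactly this, and your flagging of the disconnected case is a legitimate caveat that the paper itself does not address.
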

\begin{proof}
The proof is similar to the proof of Corollary~\ref{fact.cor}, taking
into account
the fact that, in this case, $G_{\uni}$ is a unipotent group, which is
always observable.
\end{proof}

The next result shows that observability is well behaved under
{\em induction} of actions (see Definition \ref{induction.def}).

\begin{proposition}
Let $G$ be an affine algebraic group and $H\subset G$ a closed
subgroup, such that $G/H$ is affine. Let $H\times X\to X$ be an
action of $H$ on the affine variety $X$. Then the action of $H$ is
observable if and only if the induced
action of $G$ on the affine variety $G*_HX$ is observable.
\end{proposition}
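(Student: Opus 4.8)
The plan is to reduce observability, on both sides, to a statement about proper invariant closed subsets, and then to transport such subsets and their defining invariants across the induced space using the dictionary of Definition \ref{induction.def} together with a single identification of invariant rings. First I would record that, because $G/H$ is affine, the induced variety $G*_HX$ is itself affine (this is exactly what the hypothesis buys us, and I would cite \cite{Ti06} and \cite{kn:bb-induced} for the existence and affineness of the geometric quotient); in particular $\Bbbk[G*_HX]=(\Bbbk[G]\otimes\Bbbk[X])^H$ and $G*_HX$ is reduced. Next I would restate observability geometrically: since the varieties involved are reduced, an action is observable if and only if every proper invariant closed subset carries a nonzero invariant regular function vanishing on it. (Given a nonzero invariant ideal $I$, its zero set $\mathcal V(I)$ is proper and $\sqrt I=\mathcal I(\mathcal V(I))$ is again invariant; a nonzero invariant $f\in\sqrt I$ then has a nonzero invariant power lying in $I$, nonzero by reducedness.)

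The heart of the argument is the isomorphism $\Bbbk[G*_HX]^G\cong\Bbbk[X]^H$ given by restriction to the fiber $X=\rho^{-1}([1])=\{[1,x]\}$. I would first observe that this fiber is $H$-stable and that the $G$-action of $h\in H$ on it reproduces the original $H$-action on $X$, since $h\cdot[1,x]=[h,x]=[1,hx]$; hence restriction carries $G$-invariants to $H$-invariants. Injectivity is immediate because $G*_HX=G\cdot X$, so a $G$-invariant vanishing on the fiber vanishes identically; surjectivity is witnessed by sending $\phi\in\Bbbk[X]^H$ to the function $[g,x]\mapsto\phi(x)$, which is well defined (as $\phi(hx)=\phi(x)$) and regular (it descends from the $H$-invariant function $(g,x)\mapsto\phi(x)$ on $G\times X$), is $G$-invariant, and restricts to $\phi$.

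With these tools in hand the two implications become symmetric. For the direction ``$H$ observable on $X$ $\Rightarrow$ $G$ observable on $G*_HX$'', take a proper $G$-stable closed $W\subsetneq G*_HX$; the correspondence of Definition \ref{induction.def} yields a proper $H$-stable $Y=W\cap X\subsetneq X$ with $W=GY$, and properness transfers because the correspondence is a bijection. Then $H$-observability gives a nonzero $\phi\in\mathcal I(Y)^H$, and the associated invariant $f([g,x])=\phi(x)$ is a nonzero element of $\Bbbk[G*_HX]^G$ vanishing on $GY=W$. Conversely, a proper $H$-stable $Y\subsetneq X$ produces the proper $G$-stable $W=GY$; a nonzero $G$-invariant $f$ vanishing on $W$ restricts to a nonzero $H$-invariant $f|_X$ vanishing on $Y\subseteq W$.

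I expect the only genuine obstacle to be the first step, namely establishing that $G*_HX$ is affine with coordinate ring $(\Bbbk[G]\otimes\Bbbk[X])^H$ precisely because $G/H$ is affine; once that is granted, everything else is a direct transfer through the $H$-equivariant fiber inclusion and the closed-subset correspondence, with no further reductive or unipotent structure on $G$ required.
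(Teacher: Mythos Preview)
Your proposal is correct and follows essentially the same route as the paper's proof: both arguments first identify $\Bbbk[G*_HX]^G\cong\Bbbk[X]^H$ via restriction to the fiber $\rho^{-1}([1])$, and then transfer invariants vanishing on a proper invariant closed subset across the bijection between $G$-stable closed subsets of $G*_HX$ and $H$-stable closed subsets of $X$ from Definition~\ref{induction.def}. Your write-up is in fact somewhat more careful than the paper's (you spell out properness, injectivity/surjectivity of the invariant-ring isomorphism, and the reduction from ideals to radical ideals), but the substance is the same.
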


\begin{proof}
First of all, observe that since $G/H$ is affine, it follows that $G*
_HX$ is an affine $G$-variety affine. Hence,
$\Bbbk[G*_HX]=\bigl(\Bbbk[G]\otimes
\Bbbk[X]\bigr)^H$, where if $f\otimes g\in \Bbbk[G]\otimes
\Bbbk[X]$, then $\bigl(a\cdot (f\otimes
g)\bigr)(b,x)=f(ba)g(a^{-1}\cdot x)$, for all $a\in H$, $b\in G$,
$x\in X$. A simple calculation shows then that $\Bbbk[G*_HX]^G=\Bbbk\otimes
\Bbbk[X]^H\cong \Bbbk[X]^H$.

Assume now that the action $H\times X\to X$ is observable and let
$Z\subset G*_HX$ be  a nonempty $G$-stable closed subset. If
$\rho:G*_HX\to G/H$ is the canonical projection, then $Z\cap
\rho^{-1}(1H)\neq \emptyset$ is a $H$-stable closed subset. It follows
that there exists $f\in
\mathcal I\bigl(Z\cap \rho^{-1}(0)\bigr)^H\setminus \{0\}$. Then
$1\otimes f\in \mathcal I(Z)^G\setminus \{0\}$.

Assume now that the action $G\to G*_HX$ is observable, and let
$Y\subset X$ be a $H$-stable closed subset. Then $GY\subset G*_HX$ is
a $G$-stable closed subset, and hence there exists $1\otimes f\in
\Bbbk\otimes \Bbbk[X]^H= \Bbbk[G*_HX]^G$ such that $(1\otimes
f)(Z)=0$. It follows that $f(Y)=f\bigl(Z\cap \rho^{-1}(0)\bigr)=0$.
\end{proof}

As the final result of this section we show that, if the action is
observable, there is a refinement in the construction of the open set 
$X_0\subseteq X$ of Rosenlicht's Theorem \ref{thm:rosenlicht}. 
In this case, there exists a $G$-stable \emph{principal} affine open subset 
$X_f\subseteq X$ with affine geometric quotient.

\begin{theorem}
\label{thm:geoquot}
Let  $G$ an affine algebraic group acting on  an affine variety $X$,
such that the action is observable. Then there exists $f\in
\Bbbk[X]^G\setminus \{0\}$ such that
$\Bbbk[X]^G_f$ is a finitely
generated $\Bbbk$-algebra and $X_f\to X_f\aq G$ is a geometric quotient.
\end{theorem}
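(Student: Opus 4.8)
The plan is to reduce, exactly as in the second half of the proof of Theorem \ref{thm:obsercharac}, to the situation in which every fibre of the affinized quotient is a single closed orbit, and then to recognize the resulting map as the restriction of a Rosenlicht geometric quotient to a $G$-saturated principal open subset.

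First I would invoke observability together with Theorem \ref{thm:obsercharac} to record that $\bigl[\Bbbk[X]^G\bigr]=\bigl[\Bbbk[X]\bigr]^G$ and that $\Omega(X)$ contains a nonempty open subset. Using Grosshans' Proposition \ref{prop:grosshanloc}, I would then pick $h\in\Bbbk[X]^G\setminus\{0\}$ with $\Bbbk[X_h]^G=\bigl(\Bbbk[X]^G\bigr)_h$ finitely generated; by Proposition \ref{prop:obserchar} the action on $X_h$ is again observable, and both hypotheses above persist on $X_h$, since localization preserves the field equality and $\Omega(X)\cap X_h$ still contains a nonempty open set. Replacing $X$ by $X_h$, I may therefore assume $\Bbbk[X]^G$ is finitely generated, so that the affinized quotient $\pi:X\to X\aq G$ is a morphism of affine varieties and Theorem \ref{thm:niceopensubset} applies.

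Next I would run the construction already present in the proof of Theorem \ref{thm:obsercharac}. Theorem \ref{thm:niceopensubset} yields $f\in\Bbbk[X]^G$ with $\pi^{-1}(y)=\overline{\cO(x)}$ (for some $x\in X_{\max}$) for every $y\in X_f\aq G$, and, comparing with the Rosenlicht quotient $\rho:(X_f)_0\to (X_f)_0/G$ of Theorem \ref{thm:rosenlicht}, the induced map $\varphi:(X_f)_0/G\to X_f\aq G$ is an open immersion. Using $\Omega(X)\cap (X_f)_0\neq\emptyset$, I would choose $g\in\Bbbk[X]^G$ with $(X_f\aq G)_g\subseteq (X_f)_0/G$; over this open set each fibre is a closed orbit of maximal dimension, of the form $\cO(x)=\overline{\cO(x)}$ with $x\in\Omega(X)\cap(X_f)_0$. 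Since $(X_f)_0$ is $G$-stable, such an orbit lies entirely inside $(X_f)_0$, whence
\[
X_{fg}=\pi^{-1}\bigl((X_f\aq G)_g\bigr)=\rho^{-1}\bigl((X_f\aq G)_g\bigr)\subseteq (X_f)_0 .
\]

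Finally, $X_{fg}$ is a $G$-saturated open subset of $(X_f)_0$, so the restriction of the geometric quotient $\rho$ to it is again a geometric quotient, onto the affine variety $X_{fg}\aq G=(X_f\aq G)_g$; this is the assertion, with the required invariant being the product $hfg$, for which $\Bbbk[X]^G_{hfg}$ is finitely generated as a localization of a finitely generated algebra. The one point deserving care — and what I expect to be the main obstacle — is the verification that $X_{fg}$ genuinely sits inside the Rosenlicht open set $(X_f)_0$, rather than merely agreeing with a geometric quotient on a dense open part. This is exactly what the closed-orbit description of the fibres buys us: it is what upgrades the \emph{generic} geometric quotient of Rosenlicht's theorem to a genuine geometric quotient on a full principal affine open subset.
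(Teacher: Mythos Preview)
Your argument is correct, but it takes a noticeably longer route than the paper's own proof. You essentially re-run the fibre-by-fibre analysis from the converse direction of Theorem~\ref{thm:obsercharac} (via Theorem~\ref{thm:niceopensubset}) in order to manufacture a principal open $X_{fg}$ that happens to sit inside the Rosenlicht open set. The paper bypasses all of this by invoking observability in the form of Proposition~\ref{prop:obserchar}(2) directly: given the Rosenlicht open set $X_0\subset X$, observability immediately produces $g\in\Bbbk[X]^G$ with $X_g\subset X_0$. Combining with the Grosshans localization $f$, one has $X_{fg}\subset X_0$ with $\Bbbk[X_{fg}]^G$ finitely generated, so both the geometric quotient $\rho:X_{fg}\to X_{fg}/G$ and the affinized quotient $\pi:X_{fg}\to X_{fg}\aq G$ are available from the outset. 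The remaining work is just to compare them: the field equality $[\Bbbk[X]^G]=[\Bbbk[X]]^G$ makes the comparison map $\varphi:X_{fg}/G\to X_{fg}\aq G$ birational, and one further localization by some $q\in\Bbbk[X_{fg}]^G$ turns it into an isomorphism.

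What your approach buys is a more explicit geometric picture (fibres are single closed orbits), at the cost of re-deriving machinery already packaged in Theorem~\ref{thm:obsercharac}. What the paper's approach buys is brevity: once you remember that observability lets you shrink \emph{any} $G$-stable open to a principal one, the delicate point you flag---verifying $X_{fg}\subset(X_f)_0$---simply evaporates, because $X_g\subset X_0$ is obtained in one stroke rather than assembled fibrewise.
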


\begin{proof}
By Proposition \ref{prop:grosshanloc}, there exists $f\in
\Bbbk[X]^G\setminus \{0\}$ such that
$\Bbbk[X]^G_f$ is a finitely
generated algebra. Let $X_0\subset X$ be an open subset as in Rosenlicht's
Theorem. Then  by Proposition \ref{prop:obserchar} there exists $g \in
\Bbbk[X]^G\setminus \{0\}$ such that $X_g\subset X_0$. It follows that
$X_{fg}\subset X_0$ is such that $\bigl(\Bbbk[X]_{fg}\bigr)^G\cong
\bigl(\Bbbk[X]_{f}^G\bigr)_g$ is finitely generated.

Let $h=fg$ and consider the commutative diagram
\begin{center}
\mbox{
\xymatrix{
X_h\ar@{->}[r]^-{\pi}\ar@{->}[d]_-{\rho}&X_h\aq G\\
X_h/G \ar@{->}[ur]_-\varphi&
}
}
\end{center}

Then,
 \[
\Bbbk(X_h/G)\cong
\Bbbk(X_h)^G=\bigl[\Bbbk[X_h]\bigr]^G\cong  \bigl[\Bbbk[X]_h\bigr]^G=
\bigl[\Bbbk[X]_h^G\bigr].
\]

 It follows that $\varphi$ is a
birational morphism.

Let $q\in \Bbbk[X]_f^G$ such that $\varphi$ restricted
to $\varphi^{-1}\bigl((X_h\aq G)_q\bigr)$ is an isomorphism
over $(X_h\aq G)_q$. Since
$\pi^{-1}\bigl((X_h\aq G\bigr)_q\bigr)= X_{hq}$, it
follows that  $\rho|_{_{X_{hq}}}:X_{hq} \to
\varphi^{-1}\bigl((X_h\aq G)_q\bigr)$ is the geometric quotient.
\end{proof}

\section{The reductive case}

In what follows we apply the results of the previous section to the situation
of a reductive group $G$ acting on the affine variety $X$.
 We prove
 in Theorem \ref{thm:socobser} that there exists a unique, maximal, $G$-stable
 closed subset $X_{\soc}\subset X$ such that the restricted action
 $G\times X_{\soc} \to X_{\soc}$ is observable. Furthermore, the
 categorical quotient $X_{\soc}\cq G$ exists. See Theorem
 \ref{thm:socquot}.

\begin{definition}
Let $G$ be an affine algebraic group acting on an algebraic variety $X$. We
define the \emph{socle} of the action to be
\[
X_{\soc}=\overline{\bigcup_{\overline{\mathcal O(x) }=\mathcal O(x) }
  \cO(x)}.
\]
\end{definition}

\begin{remark}
Observe that $(X_{\soc})_{\soc}=X_{\soc}$.
\end{remark}

\begin{lemma}
 Let $G$ be a reductive group acting on an affine variety
 $X$, and $\pi:X\to X\cq G$ the categorical quotient. Then
\[
\Omega(X)= \pi^{-1}\bigl( (X\cq G)\setminus \pi(X\setminus X_{\max})\bigr).
\]

In particular, $\Omega(X)$ is a (possibly empty) $G$-stable open subset.
\end{lemma}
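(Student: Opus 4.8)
The plan is to first reinterpret the right-hand side fiberwise. Since $X_{\max}$ is open, the set $X\setminus X_{\max}$ is closed and $G$-stable, and a point $y\in X\cq G$ lies in $(X\cq G)\setminus\pi(X\setminus X_{\max})$ precisely when $\pi^{-1}(y)$ does not meet $X\setminus X_{\max}$, i.e. when $\pi^{-1}(y)\subseteq X_{\max}$. Pulling back along $\pi$, the right-hand side becomes the $G$-stable set $\bigl\{x\in X:\pi^{-1}(\pi(x))\subseteq X_{\max}\bigr\}$ of points all of whose fiber-companions have maximal orbit dimension. The whole argument then reduces to comparing this set with $\Omega(X)$.

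I would invoke two standard structural facts for the action of a reductive group on an affine variety: each fiber $\pi^{-1}(y)$ contains a unique closed $G$-orbit, and this closed orbit is contained in the closure of every orbit lying in $\pi^{-1}(y)$. Combined with the elementary observation that the boundary $\overline{\cO}\setminus\cO$ of any orbit is a $G$-stable closed set of strictly smaller dimension than $\cO$, these two facts drive both inclusions. Let $m$ denote the maximal orbit dimension.

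For $\Omega(X)\subseteq\bigl\{x:\pi^{-1}(\pi(x))\subseteq X_{\max}\bigr\}$: if $x\in\Omega(X)$ then $\cO(x)$ is closed of dimension $m$, hence it is the unique closed orbit in its fiber. Any further orbit $\cO'$ in the fiber would contain $\cO(x)$ in its boundary, forcing $\dim\cO(x)<\dim\cO'\le m$, a contradiction; so the fiber equals $\cO(x)\subseteq X_{\max}$. Conversely, suppose $\pi^{-1}(\pi(x))\subseteq X_{\max}$. Then in particular $x\in X_{\max}$, so $\dim\cO(x)=m$, and the unique closed orbit $\cO_0$ of the fiber also has dimension $m$. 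Since $\cO_0\subseteq\overline{\cO(x)}$, if $\cO(x)$ were not closed then $\cO_0$ would lie in the boundary of $\cO(x)$, forcing $m=\dim\cO_0<\dim\cO(x)=m$. Hence $\cO(x)=\cO_0$ is closed and $x\in\Omega(X)$. This establishes the displayed equality.

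For the concluding assertion, $G$-stability is automatic because the set is a preimage under the $G$-invariant map $\pi$. Openness follows since, for reductive $G$, the quotient morphism $\pi$ carries closed $G$-invariant subsets to closed subsets; applied to the closed $G$-stable set $X\setminus X_{\max}$, this shows $\pi(X\setminus X_{\max})$ is closed, so its complement is open and $\Omega(X)$, being the $\pi$-preimage of that complement, is open. The main obstacle I anticipate is keeping the boundary-dimension bookkeeping correct and applying the ``unique closed orbit contained in the closure of every fiber orbit'' fact in the right direction; once these are in place, both inclusions are short.
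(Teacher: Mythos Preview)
Your proof is correct and follows essentially the same route as the paper: both arguments hinge on the standard facts that each fiber of $\pi$ contains a unique closed orbit lying in the closure of every orbit of that fiber, that boundary orbits have strictly smaller dimension, and that $\pi$ sends $G$-stable closed sets to closed sets. The paper declares the inclusion $\Omega(X)\subseteq\pi^{-1}\bigl((X\cq G)\setminus\pi(X\setminus X_{\max})\bigr)$ ``clear'' and proves the reverse inclusion by contrapositive (if $x\notin\Omega(X)$ then the closed orbit in $\overline{\cO(x)}$ lies in $X\setminus X_{\max}$), whereas you argue both inclusions directly and in fact show a little more for the forward one, namely that the whole fiber over $\pi(x)$ equals $\cO(x)$; but this is a difference of presentation, not of method.
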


\begin{proof}
It is clear that $\Omega(X)\subset \pi^{-1}\bigl( (X\cq G)\setminus
\pi(X\setminus X_{\max})\bigr)$.  Since
$X\setminus X_{\max}$ is a $G$-stable closed subset, it follows
that $\pi(X\setminus X_{\max})$ is closed in $X\cq G$. Let $x\in
X\setminus \Omega(X)$; then $\overline{\cO(x)}$ contains an unique closed
orbit $\cO'$, with $\dim \cO'<\dim \cO$. It follows that
 $\pi(x)=\pi(\cO')\in \pi (X\setminus X_{\max})$; that is $x\notin
 \pi^{-1}\bigl( (X\cq G)\setminus \pi(X\setminus X_{\max})\bigr)$.
\end{proof}

\begin{proposition}[{Popov, \cite{Pop}}]
\label{prop:equivstability}
Let $G$ be reductive group acting on an  affine algebraic
variety $X$, is such a way that $\Omega(X)\neq
\emptyset$. Then $\Omega(X)$ is an open subset of $X$.
\end{proposition}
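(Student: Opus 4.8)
The plan is to read the result off directly from the preceding Lemma. That Lemma, valid for any reductive $G$ acting on an affine $X$ with categorical quotient $\pi\colon X\to X\cq G$, gives the explicit description
\[
\Omega(X)=\pi^{-1}\bigl((X\cq G)\setminus\pi(X\setminus X_{\max})\bigr).
\]
Since $\pi$ is a morphism, hence continuous, and the set $(X\cq G)\setminus\pi(X\setminus X_{\max})$ is open in $X\cq G$, its preimage $\Omega(X)$ is open in $X$. The hypothesis $\Omega(X)\neq\emptyset$ then upgrades this to the assertion that $\Omega(X)$ is a \emph{nonempty} open (and $G$-stable) subset, which is exactly what is claimed.

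Thus the only point to verify is that the set on the right is genuinely open, and this rests on two ingredients already in place. First, $X_{\max}$ is open and $G$-stable: the stabilizer dimension $x\mapsto\dim G_x$ is upper semicontinuous, so the orbit dimension $x\mapsto\dim\cO(x)=\dim G-\dim G_x$ is lower semicontinuous and attains its maximal value on an open set; hence $X\setminus X_{\max}$ is a $G$-stable closed subset. Second, because $G$ is reductive, $\pi$ carries $G$-stable closed subsets to closed subsets of $X\cq G$, this being the standard separation property of the invariant ring $\Bbbk[X]^G$ for reductive groups. Consequently $\pi(X\setminus X_{\max})$ is closed, its complement is open, and the preimage $\Omega(X)$ is open.

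The step I expect to be the real obstacle, were one to prove the Proposition from scratch rather than invoke the Lemma, is the reverse inclusion implicit in the Lemma's formula: every $x\notin\Omega(X)$ must satisfy $\pi(x)\in\pi(X\setminus X_{\max})$. If $x\notin X_{\max}$ this is immediate; the substantive case is $x\in X_{\max}$ with $\cO(x)$ non-closed, where the essential use of reductivity is that $\overline{\cO(x)}$ contains a \emph{unique} closed orbit $\cO'$ with $\pi(x)=\pi(\cO')$, and the strict inequality $\dim\cO'<\dim\cO(x)=m$ (with $m$ the maximal orbit dimension) forces $\cO'\subseteq X\setminus X_{\max}$. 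Since the preceding Lemma already supplies precisely this, the Proposition follows at once; the nonemptiness hypothesis serves only to guarantee that the always-open locus $\Omega(X)$ is in fact nonempty.
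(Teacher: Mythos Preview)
Your proof is correct and coincides with the paper's own internal logic. The paper's stated proof of this Proposition is merely a citation to external references (Popov's paper and a textbook), but the immediately preceding Lemma already establishes the formula $\Omega(X)=\pi^{-1}\bigl((X\cq G)\setminus\pi(X\setminus X_{\max})\bigr)$ and explicitly concludes ``In particular, $\Omega(X)$ is a (possibly empty) $G$-stable open subset.'' Your argument simply unpacks that ``In particular'': $X\setminus X_{\max}$ is $G$-stable closed, its image under $\pi$ is closed because $G$ is reductive, and the preimage of an open set under the continuous map $\pi$ is open. The nonemptiness hypothesis, as you note, only serves to upgrade ``possibly empty open'' to ``nonempty open.''
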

\begin{proof}
See \cite[Theorem 4]{Pop} or \cite[Theorem~13.3.13]{fer-ritt}.
\end{proof}

Next we rephrase the contents of Proposition \ref{prop:equivstability}
in a obvious but useful characterization of condition (2) of Theorem
\ref{thm:obsercharac}, namely that $\Omega(X)$ contains a nonempty
open subset.

\begin{lemma}
\label{lem:socleandomega}
Let $G$ be a reductive group acting on an affine variety $X$, and
$\pi : X\to X\cq G$ the categorical quotient. Then the
following are equivalent.

\begin{enumerate}
\item $\pi|_{_{X\setminus X_{\max}}}: X\setminus X_{\max}\to X\cq G$ is
  dominant.

\item $\pi|_{_{X\setminus X_{\max}}}: X\setminus X_{\max}\to X\cq G$ is
  surjective.

\item $X_{\max}$ does not contain any closed orbit.

\item $\Omega(X)=\emptyset$.
\qed
\end{enumerate}
\end{lemma}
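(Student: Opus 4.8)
The plan is to deduce all four equivalences from two standard facts about the categorical quotient $\pi:X\to X\cq G$ of the reductive group $G$: that $\pi$ is surjective, and that it carries $G$-stable closed subsets to closed subsets. Since $X_{\max}$ is open, $X\setminus X_{\max}$ is a $G$-stable closed subset, so $C:=\pi(X\setminus X_{\max})$ is closed in $X\cq G$. This closedness is exactly the fact already invoked in the preceding lemma, and it is the only nontrivial input; everything else is bookkeeping.

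First I would prove (1) $\Longleftrightarrow$ (2). The restriction $\pi|_{_{X\setminus X_{\max}}}$ is dominant precisely when $\overline{C}=X\cq G$. But $C$ is closed, so $\overline{C}=C$, and dominance becomes equivalent to $C=X\cq G$, i.e.\ to surjectivity; the reverse implication is trivial. Next I would establish (2) $\Longleftrightarrow$ (4). The preceding lemma gives $\Omega(X)=\pi^{-1}\bigl((X\cq G)\setminus C\bigr)$, and since $\pi$ is surjective we have $\pi^{-1}(S)=\emptyset$ if and only if $S=\emptyset$. Applying this with $S=(X\cq G)\setminus C$ shows that $\Omega(X)=\emptyset$ if and only if $C=X\cq G$, which is condition (2).

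Finally, (3) $\Longleftrightarrow$ (4) is immediate from the definition of $\Omega(X)$: a point $x$ lies in $\Omega(X)$ exactly when $\cO(x)$ is closed and of maximal dimension, that is, when $\cO(x)$ is a closed orbit contained in $X_{\max}$. Hence $\Omega(X)=\emptyset$ if and only if $X_{\max}$ contains no closed orbit. Chaining these three equivalences yields the desired result. The main obstacle, though routine in the reductive setting, is the closedness of $C=\pi(X\setminus X_{\max})$ underlying (1) $\Longleftrightarrow$ (2); once it is in hand, the remaining steps are formal consequences of the preceding lemma, the surjectivity of $\pi$, and the definition of $\Omega(X)$.
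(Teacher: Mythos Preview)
Your proposal is correct and matches the paper's intent: the paper gives no proof at all (the statement carries only a \qed), treating the lemma as an immediate rephrasing in light of the preceding lemma's formula $\Omega(X)=\pi^{-1}\bigl((X\cq G)\setminus \pi(X\setminus X_{\max})\bigr)$. Your argument is exactly the routine unpacking of that formula together with the standard facts that $\pi$ is surjective and sends $G$-stable closed sets to closed sets, which is precisely what the paper expects the reader to supply.
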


\begin{theorem}
\label{thm:socquot}
Let $G$ be a reductive group acting on an affine variety $X$, and
$Z\subset X$ a $G$-stable closed subset such that $\pi|_{_Z}:Z\to X\cq G$
is dominant. Then the unique morphism $\varphi:Z\cq G\to
X\cq G$, that fits into the following commutative diagram,
\begin{center}
\mbox{
\xymatrix{
Z\ar@{^(->}[r]\ar@{->>}[d]_-\rho&X\ar@{->>}[d]^-\pi\\
Z\cq G\ar@{->}[r]_-{\varphi} & X\cq G
}
}
\end{center}
is a bijective, purely inseparable
morphism. That is, $\varphi$ is a bijective morphism, and   $\varphi^*:
\Bbbk[X]^G\to \Bbbk[Z]^G$
induces  a purely inseparable field extension
$\bigl[\Bbbk[X]^G\bigr]\subset \bigl[\Bbbk[Z]^G\bigr]$.

Moreover, $X_{\soc}\subset Z$,  $X_{\soc}$ is irreducible,
and $\Omega(X_{\soc})\neq \emptyset$.

In particular, if $\operatorname{char} \Bbbk =0$, then $\pi|_{_Z}:Z\to
X\cq G$ is the categorical quotient and $\varphi : X_{\soc}\cq G\to X\cq G$ is an
isomorphism.
\end{theorem}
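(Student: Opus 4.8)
The plan is to combine the standard properties of the categorical quotient for reductive $G$ with the geometry of closed orbits. Throughout I use that $\pi:X\to X\cq G$ is surjective, carries $G$-stable closed subsets to closed subsets, separates closed orbits, and that each fiber $\pi^{-1}(y)$ contains a \emph{unique} closed orbit.

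I would first establish the irreducibility of $X_{\soc}$, which I expect to be the main obstacle, since a priori $X_{\soc}$ is only the closure of the union of \emph{all} closed orbits. Because $X$ is irreducible, the generic fiber of $\pi$ is irreducible, and the unique closed orbit it contains is a geometrically irreducible orbit of some dimension $d$. Spreading this out, there is a dense open $V\subseteq X\cq G$ and an irreducible locally closed $\Sigma\subseteq\pi^{-1}(V)$ whose fiber over each $y\in V$ is the closed orbit $\cO_y\subseteq\pi^{-1}(y)$; irreducibility of $\Sigma$ follows from that of $V$ together with the geometric irreducibility of the generic orbit. I claim $X_{\soc}=\overline{\Sigma}$. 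The inclusion $\overline{\Sigma}\subseteq X_{\soc}$ is clear. Conversely, $\overline{\Sigma}$ is $G$-stable, closed and irreducible, and $\pi(\overline{\Sigma})$ is closed and contains $V$, hence equals $X\cq G$; therefore $\overline{\Sigma}$ meets every fiber in a nonempty $G$-stable closed set, which must contain that fiber's unique closed orbit. Thus every closed orbit lies in $\overline{\Sigma}$, so $X_{\soc}\subseteq\overline{\Sigma}$, giving $X_{\soc}=\overline{\Sigma}$ irreducible.

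Next I would treat the containment and the bijectivity of $\varphi$. Since $\pi|_{_Z}$ is dominant and $\pi(Z)$ is closed, $\pi|_{_Z}$ is surjective; so $Z$ meets every fiber of $\pi$, and being $G$-stable and closed it contains the unique closed orbit of each fiber. Hence $Z$ contains every closed orbit of $X$, and in particular $X_{\soc}\subseteq Z$. Consequently the closed $G$-orbits of $Z$ are exactly those of $X$, and since both $\rho:Z\to Z\cq G$ and $\pi$ separate closed orbits, $\rho(z_1)=\rho(z_2)\iff\pi(z_1)=\pi(z_2)$; together with surjectivity this shows $\varphi$ is bijective. Moreover any $f\in\Bbbk[Z]^G$ is determined by its values on closed orbits, all of which lie in $X_{\soc}$, so the restriction $\Bbbk[Z]^G\to\Bbbk[X_{\soc}]^G$ is injective; as $X_{\soc}$ is irreducible, $\Bbbk[X_{\soc}]^G$ is a domain and hence so is $\Bbbk[Z]^G$, i.e. $Z\cq G$ is irreducible.

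For the field extension I would note that $\varphi:Z\cq G\to X\cq G$ is a dominant, bijective morphism of irreducible affine varieties; bijectivity forces equal dimension, so $\bigl[\Bbbk[X]^G\bigr]\subset\bigl[\Bbbk[Z]^G\bigr]$ is finite, and its separable degree, being the number of points of a general fiber, equals $1$, so the extension is purely inseparable. For $\Omega(X_{\soc})\neq\emptyset$, the identity $X_{\soc}=\overline{\Sigma}$ shows the generic fiber of $X_{\soc}\to X_{\soc}\cq G$ is a single closed orbit of dimension $d$, so these closed orbits fill a dense open subset of $X_{\soc}$; since $(X_{\soc})_{\max}$ is also dense open, the two meet, whence $d$ is the maximal orbit dimension and such an orbit lies in $\Omega(X_{\soc})$. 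Finally, in characteristic $0$ linear reductivity makes the invariants functor exact, so the surjection $\Bbbk[X]\to\Bbbk[Z]$ yields a surjection $\Bbbk[X]^G\to\Bbbk[Z]^G$; this map is also injective by dominance, hence an isomorphism, so $\varphi$ is an isomorphism and $\pi|_{_Z}$ is the categorical quotient.
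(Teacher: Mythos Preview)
Your proof is correct and covers all the assertions, but it diverges from the paper's argument in two places, and the paper's route is noticeably shorter.

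For the irreducibility of $X_{\soc}$ you build, by spreading out from the generic fibre, an irreducible locally closed $\Sigma$ whose fibres over a dense open $V\subseteq X\cq G$ are the closed orbits, and then show $X_{\soc}=\overline{\Sigma}$. This works, but the step ``irreducibility of $\Sigma$ follows from that of $V$ together with the geometric irreducibility of the generic orbit'' deserves a word more: an irreducible base with irreducible fibres does not by itself force irreducibility of the total space. One honest way is to define $\Sigma'=\{x\in X:\dim G_x\ge\dim G-d\}$, which is closed in $X$, take the unique irreducible component of $\Sigma'$ dominating $X\cq G$, and then check that over a suitable open $V$ its fibres are exactly the closed orbits. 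The paper bypasses all of this with a one-line bootstrap: having already shown that any $G$-stable closed $Z$ with $\pi|_Z$ dominant contains $X_{\soc}$, it simply takes $Z$ to be an irreducible component of $X_{\soc}$ that dominates $X\cq G$; then $X_{\soc}\subseteq Z$, so $X_{\soc}=Z$ is irreducible. The same bootstrap handles $\Omega(X_{\soc})\neq\emptyset$: if $X_{\soc}\setminus(X_{\soc})_{\max}$ dominated $X_{\soc}\cq G$, it would contain $(X_{\soc})_{\soc}=X_{\soc}$, a contradiction; then Lemma~\ref{lem:socleandomega} finishes. Your argument via $\Sigma$ reaches the same conclusion but carries the extra baggage of the spreading-out construction.

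For pure inseparability and the characteristic~$0$ statement the two arguments are essentially interchangeable: the paper quotes directly that for reductive $G$ every $f\in\Bbbk[Z]^G$ has some $p^n$-th power in the image of $\Bbbk[X]^G$, which in characteristic~$0$ gives surjectivity outright; you instead deduce pure inseparability from bijectivity of $\varphi$ between irreducible varieties and then invoke exactness of invariants in characteristic~$0$. Both are standard.
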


\begin{proof}
Since $Z\subset X$ is closed, it follows that $\pi(Z)\subset X\cq G$ is
closed, and
hence $\pi(Z)=X\cq G$. In particular, if $\cO\subset X$ is a closed
orbit, it follows that $\pi(\cO)\in \pi(Z)=X$, and hence there exists an
orbit $\cO'\in Z$ such that $\pi(\cO')=\pi(\cO)$. Therefore, since
$\pi$ separates closed orbits, it follows that $\cO'=\cO \subset Z$
and hence $X_{\soc}\subset Z$.

Since $\varphi\circ \rho$ is dominant, it follows that $\varphi$ is
dominant. Since $\pi$ and $\rho$ separate closed orbits, then
$\varphi$ is injective. Thus, we have the
 following commutative
 diagram:
\begin{center}
\mbox{
\xymatrix{
\Bbbk[X]^G\ar@{^(->}[r]\ar@{^(->}[rrd]_-{\varphi^*}&\Bbbk[X]\ar@{->>}[r]&
\Bbbk[Z]\\
& & \Bbbk[Z]^G\ar@{^(->}[u]
}
}
\end{center}
Since $G$ is reductive,  $\Bbbk[Z]^G$ is finitely generated and there
exists $n\geq 0$ such that for all $f\in
\Bbbk[Z]^G$, there exists $g\in \Bbbk[X]^G$, with
$\varphi(g)=f^{p^n}$ (see for example \cite[\S 9.2]{fer-ritt}).  Hence
$\pi|_{_Z}:Z\to X\cq G$ is a purely
inseparable morphism. It follows that if $\operatorname{char}\Bbbk=0$,
then $X\cq G\cong Z\cq G$.

Since $\pi|_{_{X_{\soc}}}:X_{\soc}\to X\cq G$ is surjective, it follows
that $ \varphi: X_{\soc}\cq G\to X\cq G$ is  a purely inseparable
morphism.

Let $Z\subset X_{\soc}$ an irreducible component such that $\pi|_{_Z}$ is
dominant. Then $X_{\soc}\subset
Z$, and hence $X_{\soc}$ is irreducible.

Consider $Z=X_{\soc}\setminus (X_{\soc})_{\max}$. If $Z$ dominates
$X_{\soc}\cq G$, it follows that $X_{\soc}=(X_{\soc})_{\soc}\subset Z$, that is a
contradiction. Hence, by Lemma \ref{lem:socleandomega}, it follows
that $\Omega(X_{\soc})\neq  \emptyset$.
\end{proof}

\begin{theorem}
\label{thm:socobser}
Let $G$ be reductive group acting on an  affine algebraic
variety $X$.  Then the action is observable if and only if $\Omega(X)\neq
\emptyset$. In particular, $X_{\soc}$ is the maximum $G$-stable
closed
subset $Z\subset X$ such that the restricted action $G\times Z\to Z$
is observable.
\end{theorem}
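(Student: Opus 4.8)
The plan is to derive the stated equivalence from the geometric characterization in Theorem \ref{thm:obsercharac} together with the results specific to the reductive setting, and then to deduce the maximality of $X_{\soc}$ from a density statement about closed orbits. For the equivalence ``observable $\iff \Omega(X)\neq\emptyset$'' I would argue both directions separately. The forward direction is immediate: if the action is observable, then by Theorem \ref{thm:obsercharac} the set $\Omega(X)$ contains a nonempty open subset, and in particular $\Omega(X)\neq\emptyset$. For the converse, suppose $\Omega(X)\neq\emptyset$; by Popov's Proposition \ref{prop:equivstability}, $\Omega(X)$ is then open in $X$, hence contains a nonempty open subset. Writing the reductive group as the trivial direct product $G=G\times\{1\}$ of a reductive group with a unipotent group, Proposition \ref{prop:partialconv} applies verbatim and yields both $\bigl[\Bbbk[X]^G\bigr]=\bigl[\Bbbk[X]\bigr]^G$ and the observability of the action. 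This is cleaner than invoking Theorem \ref{thm:obsercharac} directly, since Proposition \ref{prop:partialconv} delivers condition (1) for free in the reductive case.

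For the ``in particular'' clause I must show two things: that the restricted action on $X_{\soc}$ is observable, and that $X_{\soc}$ contains every $G$-stable closed subset on which the action is observable. The first follows by applying Theorem \ref{thm:socquot} with $Z=X$, which is legitimate since $\pi$ is surjective and hence dominant; that theorem gives $\Omega(X_{\soc})\neq\emptyset$, and since $(X_{\soc})_{\soc}=X_{\soc}$ and $G$ is reductive, the equivalence just proved shows that $G\times X_{\soc}\to X_{\soc}$ is observable.

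The maximality is where the main work lies. Let $Z\subseteq X$ be a $G$-stable closed subset whose induced action is observable; I claim $Z\subseteq X_{\soc}$. The key step is to show that the closed orbits are dense in $Z$, i.e.\ $Z_{\soc}=Z$. If this failed, $Z_{\soc}\subsetneq Z$ would be a proper $G$-stable closed subset, so by observability there would be a nonzero $f\in\mathcal I(Z_{\soc})^G$; but for reductive $G$ every orbit closure in the affine variety $Z$ contains a (unique) closed orbit, and such an invariant $f$, being constant on orbit closures, would vanish on that closed orbit and hence at every point of $Z$, contradicting $f\neq0$. Thus $Z_{\soc}=Z$. Finally, since $Z$ is closed in $X$, an orbit that is closed in $Z$ is also closed in $X$ (its closure in $X$ lies in $Z$ and already coincides with the orbit inside $Z$), so $Z_{\soc}\subseteq X_{\soc}$ and therefore $Z=Z_{\soc}\subseteq X_{\soc}$.

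The main obstacle I anticipate is precisely this density argument: one must use reductivity twice, once in the form ``every orbit closure meets a closed orbit'' to force a nonzero invariant to vanish identically, and once to check that closedness of orbits is inherited between $X$ and $Z$. Everything else reduces to bookkeeping and citation of the earlier results.
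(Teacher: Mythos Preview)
Your proof is correct, and the overall architecture matches the paper's: forward direction via Theorem~\ref{thm:obsercharac}, converse via reductivity, then $\Omega(X_{\soc})\neq\emptyset$ from Theorem~\ref{thm:socquot}, then maximality from a density-of-closed-orbits argument. The differences are in the execution of two steps.

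For the converse, the paper argues directly: given a proper $G$-stable closed $Z\subsetneq X$, it picks a closed orbit $\cO\subset\Omega(X)\setminus Z$, observes $\pi^{-1}(\pi(\cO))=\cO$, and separates $\cO$ from the saturation $\pi^{-1}(\pi(Z))$ by an invariant. You instead invoke Proposition~\ref{prop:partialconv} with $U=\{1\}$, which is perfectly legitimate and is essentially the same argument packaged as a citation; your route is shorter on the page but relies on noticing that the trivial group is unipotent.

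For maximality, the paper first reduces to irreducible $Z$ via Lemma~\ref{lem:obsred}, then uses that $\Omega(Z)$ is a nonempty open subset of closed orbits (closed in $Z$, hence in $X$), so $Z=\overline{\Omega(Z)}\subset X_{\soc}$. Your argument bypasses both the irreducibility reduction and $\Omega(Z)$: you show $Z_{\soc}=Z$ by observing that a nonzero invariant vanishing on $Z_{\soc}$ would vanish on every orbit closure (each meeting a closed orbit), hence identically. This is a clean alternative and has the minor advantage of handling reducible $Z$ uniformly. The remark that $(X_{\soc})_{\soc}=X_{\soc}$ is not actually needed where you place it, since $\Omega(X_{\soc})\neq\emptyset$ already comes from Theorem~\ref{thm:socquot}.
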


\begin{proof}
If the action is observable, it follows from Theorem
\ref{thm:obsercharac} that $\Omega(X) \neq \emptyset$.
Assume now that $\Omega(X)\neq \emptyset $ and let $Z\subsetneq X$ be a
$G$-stable closed subset. If $\Omega(X)\subset Z$ it follows that
$Z=X$; hence $\Omega(X)\setminus  Z\neq \emptyset$.
Recall that the categorical
quotient $\pi:X\to X\cq G=\Spec\bigl(\Bbbk[X]^G\bigr)$  separates closed
orbits.  It follows that
$ \Omega(X) \setminus \pi^{-1}\bigl(\pi(Z)\bigr)\neq \emptyset$, since
the closed orbits belonging to $Z$ and $\pi^{-1}\bigl(\pi(Z)\bigr)$
are the same. Let $\cO\subset  \Omega(X) \setminus Z$ be a closed
orbit. Then $\pi^{-1}\bigl(\pi(\cO)\bigr)=\cO$, again because $\pi$
separates closed orbits. Since $\pi$ also separates saturated closed
subsets, it follows  that there exists $f\in \Bbbk[X]^G$ such that
$f\in \mathcal I\bigl(\pi^{-1}\bigl(\pi(Z)\bigr)\bigr)\subset \mathcal
I(Z)$ and $f(\cO)=1$;
in particular, $f\in \mathcal
I(Z)^G\setminus \{0\}$.

It follows by the very definition of $X_{\soc}$  (e.g.~from
Proposition \ref{prop:equivstability})  that
$\Omega(X_{\soc})\neq \emptyset$. Let now $Z$ be a $G$-stable
irreducible closed
subset such that the restricted action is observable; then $\Omega(Z)$
is a nonempty open subset of $Z$, consisting of closed orbits in $Z$,
and hence in $X$. It follows that $Z=\overline{\Omega(Z)}\subset
X_{\soc}$.  If $Y$ is any $G$-stable closed subset, it follows by
Lemma \ref{lem:obsred} that the restriction of the action
to any irreducible component $Z$ is
observable, and hence $Y\subset X_{\soc}$.
\end{proof}

\begin{proposition}
Let $G$ be a reductive group acting on an affine variety $X$. Then
$\mathcal I(X_{\soc})$ is the maximum $G$-stable ideal such that
$I^G=(0)$.
\end{proposition}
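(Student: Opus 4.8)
The statement asserts that $\mathcal I(X_{\soc})$ is the maximum $G$-stable ideal $I\subset\Bbbk[X]$ with $I^G=(0)$. The plan is to verify two things: first, that $\mathcal I(X_{\soc})^G=(0)$, so that $\mathcal I(X_{\soc})$ is a legitimate candidate; and second, that every $G$-stable ideal $I$ with $I^G=(0)$ satisfies $I\subseteq\mathcal I(X_{\soc})$, which is the maximality. I would open by recording that $\mathcal I(X_{\soc})$ is $G$-stable because $X_{\soc}$ is a $G$-stable closed subset.

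For the first claim, suppose $f\in\mathcal I(X_{\soc})^G$, i.e.\ $f\in\Bbbk[X]^G$ and $f|_{X_{\soc}}=0$. Since $G$ is reductive, the categorical quotient $\pi:X\to X\cq G$ separates closed orbits, and every fiber $\pi^{-1}(\pi(x))$ contains a unique closed orbit. The key point is that $X_{\soc}$, by definition the closure of the union of the closed orbits, meets every such fiber: for any $x\in X$ the unique closed orbit in $\overline{\cO(x)}$ lies in $X_{\soc}$, and $\pi$ takes the same value on it as on $x$. Hence an invariant $f$ vanishing on $X_{\soc}$ vanishes on a point of every fiber, and since $f$ is constant on fibers this forces $f\equiv 0$ on $X$. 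Therefore $\mathcal I(X_{\soc})^G=(0)$.

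For maximality, let $I$ be any $G$-stable ideal with $I^G=(0)$. I would argue by contraposition via the observability criterion of Theorem~\ref{thm:obsercharac} together with the socle's characterization in Theorem~\ref{thm:socobser}. Concretely, if $I\not\subseteq\mathcal I(X_{\soc})$, then $\mathcal V(I)\not\supseteq X_{\soc}$, so $\mathcal V(I)\cap X_{\soc}$ is a \emph{proper} $G$-stable closed subset of $X_{\soc}$. Since $G\times X_{\soc}\to X_{\soc}$ is observable (Theorem~\ref{thm:socobser}), there is a nonzero invariant $h\in\Bbbk[X_{\soc}]^G$ vanishing on $\mathcal V(I)\cap X_{\soc}$. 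The main obstacle is the passage from this invariant on $X_{\soc}$ back to an invariant on $X$ lying in $I^G$: one must lift $h$ to an invariant on $X$ and arrange that a suitable power lands in $I$. Here I would use that $X_{\soc}$ contains every closed orbit, so the closed orbits meeting $\mathcal V(I)$ are exactly those in $\mathcal V(I)\cap X_{\soc}$; applying the separation argument of Theorem~\ref{thm:socobser} directly on $X$, one produces an invariant $f\in\Bbbk[X]^G$ vanishing on the closed orbits in $\mathcal V(I)$ but nonzero on some closed orbit outside $\mathcal V(I)$. Since $\pi$ separates saturated closed subsets and every component of $\mathcal V(I)$ contains a closed orbit in $\mathcal V(I)\cap X_{\soc}$, such an $f$ lies in the radical of $I$, whence a power $f^n\in I^G\setminus\{0\}$, contradicting $I^G=(0)$. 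This shows $I\subseteq\mathcal I(X_{\soc})$ and completes the proof.
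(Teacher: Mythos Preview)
Your argument is correct, but it follows a different route from the paper's. The paper first constructs the maximal $G$-stable ideal with trivial invariants abstractly, as the sum $I=\sum_{J^G=(0)} J$, and uses reductivity (via a Frobenius-power argument) to show that this sum still has $I^G=(0)$; it then proves $I$ is radical, shows $X_{\soc}\subset\mathcal V(I)$ by separation, and finally checks that the action on $\mathcal V(I)$ is observable so that $\mathcal V(I)\subset X_{\soc}$ by Theorem~\ref{thm:socobser}. Your approach is more direct: you verify $\mathcal I(X_{\soc})^G=(0)$ immediately from the fact that every fiber of $\pi$ meets $X_{\soc}$, and for maximality you pick a closed orbit $\cO$ disjoint from $\mathcal V(I)$, separate it from $\mathcal V(I)$ by an invariant $f$, and observe $f\in\sqrt{I}^G$ so that a power lies in $I^G$. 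This bypasses both the abstract construction of the maximum ideal and the need to show $I$ is radical. One cosmetic point: the detour through the observability of $X_{\soc}$ and the ``lifting obstacle'' is unnecessary, since you end up using the separation property of $\pi$ on $X$ directly; and the sentence justifying $f\in\sqrt{I}$ via ``every component of $\mathcal V(I)$ contains a closed orbit'' is roundabout---the invariant $f$ produced by separating $\cO$ from $\mathcal V(I)$ already vanishes on all of $\mathcal V(I)$, so $f\in\mathcal I(\mathcal V(I))=\sqrt{I}$ immediately.
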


\begin{proof}
Let $I=\sum_{ J^G=(0)}J$ be the sum of all $G$-stable ideals
such that $J^G=(0)$, and consider the canonical $G$-morphism
$\varphi: \bigoplus_{ J^G=(0)}J\to I$. Since $\varphi$ is
surjective, it follows that for every $f\in I^G$ there exist $n\geq 0$
and $h\in
\bigl(\bigoplus_{J^G=(0)}J\bigr)^G$ such that
$\varphi(h)=f^{p^n}$, where $\operatorname{char}\Bbbk=p$.
But $\bigl(\bigoplus_{J^G=(0)}J\bigr)^G=(0)$.
Hence, $I^G=(0)$.

Let $\cO\subset X$ be a closed orbit and assume that $\cO\cap \mathcal
V(I)=\emptyset$. Since $\Bbbk[X]^G$ separates $G$-stable closed
subsets, if follows that there exists $f\in \Bbbk[X]^G$ such that
$f|_{_\cO}=1$ and $f|_{_{\mathcal V(I)}}=0$, hence $I^G\neq (0)$ and
we get a contradiction. Therefore, $X_{\soc}\subset \mathcal V(I)$.

Observe that if $f\in r(I)^G$  is such that $f^n\in I$, it follows
that for any $a\in G$, then  $a\cdot (f^n)=f^n\in I$, and hence $f=0$.
 Thus, $r(I)^G=(0)$ and by maximality  then $I=r(I)$.
By  Theorem \ref{thm:socobser}, if we prove that the action
$G\times \mathcal V (I)\to \mathcal V(I)$ is observable, then
$X_{\soc}=\mathcal V(I)$. But $\Bbbk\bigl[\mathcal V(I)\bigr]\cong
\Bbbk[X]/I$, and hence  $G$-stable ideals of
$\Bbbk\bigl[\mathcal V(I)\bigr]$ are of the form $J/I$, were
$J\subset \Bbbk[X]$ is a $G$-stable ideal containing $I$. Then if
$J/I\neq (0)$ it follows that $I\subsetneq J$ and hence, by maximality
of $I$,
$J^G\neq (0)$. Thus, $(J/I)^G\neq (0)$, since $\Bbbk[X]^G$ injects in
$\Bbbk[X]/I$.
\end{proof}

\bigskip 

\bigskip
\begin{scriptsize}
\noindent \begin{tabular}{ll}
{\sc Lex Renner}   \vspace*{2pt} &          {\sc Alvaro Rittatore} \\ 
University of Western Ontario \hspace*{5.5cm} &    Facultad de Ciencias\\
London, N6A 5B7,  Canada        &  Universidad de la Rep\'ublica\\ 
{\tt lex@uwo.ca}               &  Igu\'a 4225\\
                &      11400 Montevideo, Uruguay\\
 & {\tt alvaro@cmat.edu.uy}
\end{tabular}
\end{scriptsize}


\begin{thebibliography}{100}


\bibitem{kn:bb-induced} A.~Bialynicki-Birula, \emph{On induced actions
  of algebraic groups}, Ann. de l'Inst. Fourier, 43 no. 2 (1993),
   365-368.

\bibitem{kn:obsdef}
 A.~Bialynicki-Birula, G.~Hochschild, G.D.~Mostow, \emph{Extensions of
  representations of algebraic linear groups}, Amer. J. Math. 85
(1963), 131-–144.


\bibitem{Br06} M.~Brion,
\emph{Log homogeneous varieties}, Actas del XVI Coloquio
Latinoamericano de \'Algebra, 1--39,
Revista Matem\'atica Iberoamericana, Madrid, 2007.
arXiv: math.AG/069669.


\bibitem{ritbr}  M.~Brion, A.~Rittatore,
\emph{The structure of normal algebraic monoids}, Semigroup Forum  74
(2007),  no. 3, 410--422. arXiv:math/0610351  [math.AG].

\bibitem{kn:brionlocal}  M.~Brion, \emph{The local structure of algebraic
    monoids}, preprint, arXiv:0709.1255 [math.AG].

\bibitem{kn:brionantiaff}  M.~Brion, \emph{Anti-affine algebraic
      monoids}, preprint, arXiv:0710.5211 [math.AG].

\bibitem{Co02} B.~Conrad,
\emph{A modern proof of Chevalley's theorem on algebraic groups},
J. Ramanujan Math. Soc. {\bf 17} (2002), 1--18.

\bibitem{DG70} M.~Demazure, P.~Gabriel,
{\it Groupes alg\'ebriques}, North Holland, Amsterdam, 1970.


\bibitem{fer-uni}
W.~Ferrer Santos
\emph{Closed conjugacy classes, closed orbits and structure of
  algebraic groups},
Comm. Algebra 19 (1991), no. 12, 3241--3248.

\bibitem{fer-ritt} W.~Ferrer-Santos, A.~Rittatore, \emph{Actions and
    Invariants of Algebraic Groups.}
Series: Pure and Applied Math.,  268, Dekker-CRC Press,
Florida, (2005).


\bibitem{kn:grosshanslocal} F.~Grosshans, \emph{Localization and
    Invariant Theory}, Adv.~in Math. Vol.~21, No.~1 (1976) 50--60.


\bibitem{kn:Gross-14}
F.~Grosshans, \emph{Observable groups and {H}ilbert's fourteenth problem},
  Amer. J. Math. \textbf{95} (1973), 229--253.


 \bibitem{kn:igusa} J.\ Igusa, \emph{Geometry of absolutely admissible
     representations}.  In Number theory, algebraic geometry and
   commutative algebra, in honor of Yasuo Akizuki,
   pp. 373--452. Kinokuniya, Tokyo, 1973.

\bibitem{Pop} V.L.~Popov, {\em Stability criterion for the action
of a semisimple group on a factorial manifold}, Math. USSR Izv., 1970,
4 (3), 527--535.


\bibitem{kn:popvin} V.L.~Popov, E.B.~ Vinberg, \emph{Invariant
    theory}, Algebraic geometry.   {I}{V}, Encyclop\ae dia of
  Mathematical Sciences, vol.~55, Springer-Verlag, Berlin, 1994,
  Linear algebraic groups. Invariant theory,  A. N. Parshin   and
  I. R. Shafarevich, eds.

\bibitem{Ro56} M.~Rosenlicht, \emph{Some basic theorems on algebraic
    groups}, Amer.~J.~Math. {\bf 78} (1956), 401--443.

\bibitem{Ro63}
M.~Rosenlicht, \emph{A remark on quotient spaces},
An. Acad. Brasil. Ci. \textbf{35}  (1963), 487--489.

\bibitem{Ti06} D.~A.~Timashev,
{\it Homogeneous spaces and equivariant embeddings}, to appear in the
Encyclop\ae dia of Mathematical Sciences, subseries Invariant Theory
and Algebraic Transformation Groups;  arXiv: math.AG/0602228.
\end{thebibliography}
\end{document}